\theoremstyle{definition}
\newtheorem{thm}{Theorem}
\newtheorem{theorem}{Theorem}
\newtheorem{corollary}[thm]{Corollary}
\newtheorem{lemma}[thm]{Lemma}
\newtheorem{prop}[thm]{Proposition}
\newtheorem{definition}[thm]{Definition}
\newtheorem{example}[thm]{Example}
\newtheorem{remark}[thm]{Remark}
\newtheorem{claim}[thm]{Claim}
\numberwithin{mainthm}{section}
\DeclareMathOperator{\C}{\mathcal{C}}
\DeclareMathOperator{\M}{\mathcal{M}}
\DeclareMathOperator{\D}{\mathcal{D}}
\DeclareMathOperator{\coll}{Coll}
\DeclareMathOperator{\Sp}{Sp}
\DeclareMathOperator{\Q}{\mathtt{Q}}
\DeclareMathOperator{\R}{\mathtt{R}}
\DeclareMathOperator{\unit}{\mathbf{1}}
\DeclareMathOperator{\End}{\mathit{End}}
\DeclareMathOperator{\Hom}{\mathit{Hom}}
\DeclareMathOperator{\id}{id}
\DeclareMathOperator{\Ho}{Ho}
\newcommand{\cat}{\mathcal}    
\newcommand{\cA}{{\cat A}}      
\newcommand{\cB}{{\cat B}}      
\newcommand{\cC}{{\cat C}}
\newcommand{\cD}{{\cat D}}
\newcommand{\cH}{{\cat H}}
\newcommand{\cI}{{\cat I}}
\newcommand{\cQ}{{\cat Q}}
\newcommand{\cR}{{\cat R}}
\newcommand{\cS}{{\cat S}}
\newcommand{\cP}{{\cat P}}
\newcommand{\Sets}{{\cat Set}}
\newcommand{\cCat}{{\cat Cat}}
\newcommand{\sSets}{s{\cat Set}}
\newcommand{\multi}{{\cat Multi}}
\DeclareMathOperator{\obj}{obj} 
\DeclareMathOperator{\Alg}{Alg}
\numberwithin{equation}{section}
\begin{document}
\title{The Homotopy Theory of Simplicially Enriched Multicategories}
\author[M. Robertson]{Marcy Robertson}

\address{Departament of Mathematics, University of Western Ontario, Canada}
\email{mrober97@uwo.ca}

\keywords{Colored operad; multicategory}
         
\begin{abstract}In this paper we construct a cofibrantly generated model category structure on the category of all small symmetric multicategories enriched in simplicial sets.\end{abstract} 
\maketitle

Operads are combinatorial objects that encode a variety of algebraic structures in a particular symmetric monoidal category of interest. Many usual categories of algebras (i.e. categories of commutative and associative algebras, associative algebras, Lie algebras, Poisson algebras, etc.) can be considered categories of operad representations. At the same time, operads are ``algebras'' themselves, or rather monoids in the monoidal category of (symmetric) sequences. A multicategory, or colored operad, is simply an operad with ``many objects,'' analogous to the way a category is a monoid with ``many objects.'' The precise definition and some important examples of operads, multicategories and their algebras will be reviewed in Section $1$ of this paper.\\

The purpose of this paper is the construction of a Quillen model category structure on the category of small multicategories enriched in simplicial sets. Our model structure is a blending of the Bergner model structure on the category of small simplicial categories~\cite{Bergner}, and the Berger-Moerdijk model structure on $S$-colored operads~\cite{BM07}.\\

\subparagraph{Acknowledgments:}The model category structure presented here for simplicially enriched symmetric multicategories was independently obtained by the author as part of her thesis~\cite{Me2} work and Ieke Moerdijk~\cite{MoerdijkUnpubllished} as part of a larger project on $(\infty, 1)$-operads~\cite{MC1,MC2,MC3}. The author is greatly indebted to her thesis advisor, Brooke Shipley, and to Ieke Moerdijk for the many helpful discussions, and to the later for showing her his unpublished manuscript of which she has made liberal use in the preparation of this paper.

\section[Multicategories]{Multicategories}The basic idea of a multicategory is very like the idea of a category, it has objects and morphisms, but in a multicategory the source of a morphism can be an arbitrary sequence of objects rather than just a single object.\\

A \emph{multicategory}, $\mathcal{P},$ consists of the following data:
\begin{itemize}
  \item a set of objects $\obj(\cP)$;
  \item for each $n\ge0$ and each sequence of objects $x_{1},...,x_{n},x$ a \emph{set} $\mathcal{P}(x_{1},...,x_{n};x)$ of $n$-ary operations which take $n$ inputs (the sequence $x_{1},...,x_{n}$) to a single output (the object $x$).\end{itemize}

These operations are equipped with structure maps for units and composition. Specifically, if $I=\{*\}$ denotes the one-point set, then for each object $x$ there exists a unit map $\eta_{x}:I\rightarrow\cP(x;x)$ taking $*$ to $1_{x}$, where $1$ denotes the unit of the symmetric monoidal structure on the category $\Sets$. The composition operations are given by maps$$\mathcal{P}(x_{1},...,x_{n};x)\times\mathcal{P}(y_{1}^{1},...,y_{k_{1}}^{1};x_{1})\times\cdots\times\mathcal{P}(y_{1}^{n},...,y_{k_{n}}^{n};x_{n})\longrightarrow\mathcal{P}(y_{1}^{1},...,y_{k_{n}}^{n};x)$$which we denote by $$p,q_{1},...,q_{n}\mapsto p(q_{1},...,q_{n}).$$The structure maps satisfy the associativity and unitary coherence conditions of monoids. A \emph{symmetric multicategory} is a multicategory with the additional property that the $n$-ary operations are equivariant under the permutation of the inputs. Explicitly, for $\sigma\in\Sigma_{n}$ and each sequence of objects $x_{1},...,x_{n},x$ we have a right action of $\Sigma_{n}$, i.e., a morphism$$\sigma^{*}:\mathcal{P}(x_{1},\cdots,x_{n};x)\longrightarrow\mathcal{P}(x_{\sigma(1)},...,x_{\sigma(n)};x).$$The action maps are well behaved, in the sense that all composition operations are invariant under the $\Sigma_n$-actions, and $(\sigma\tau)^{*}=\tau^{*}\sigma^{*}$.\\

In practice one often uses the following, equivalent, definition of the composition operations, given by:$$\xymatrix{\mathcal{P}(c_{1},\cdots,c_{n};c)\times\mathcal{P}(d_{1},\cdots,d_{k};c_{i})\ar[r]^{{\circ_{i}\,\,\,\,\,\,\,\,\,\,\,}} & \mathcal{P}(c_{1},\cdots,c_{i-1},d_{1},\cdots,d_{k},c_{i+1},\cdots,c_{n};c).}$$ \\

All of our definitions will still make sense if we replace $\Sets$ by any co-complete symmetric monoidal category $(\C,\otimes,\unit)$. Multicategories whose operations take values in $\C$ are called \emph{multicategories enriched in $\C$} or \emph{$\C$-multicategories}. In particular, the strong monoidal functor $\Sets\longrightarrow\mathcal{C}$ that sends a set $S$ to the $S$-fold coproduct of copies of the unit of $\C$ takes every multicategory to a $\mathcal{C}$-enriched multicategory.\\

\begin{example}[Enriched Categories]Let $S$ be a set and let $(\C,\otimes,\unit)$ be a symmetric monoidal category. There exists a (non-symmetric) $S$-colored operad $\cCat_{S}$ whose algebras are the $\C$-enriched categories with $S$ as set of objects and where the maps between algebras (i.e. functors between the $\C$-enriched categories with object set $S$) are the functors which act by the identity on objects. One puts$$\cCat_{S}((x_1,x_1'),\dots,(x_n,x_n');(x_0',x_{n+1}))=\unit$$whenever $x_i'=x_{i+1}$ for $i=0,\dots,n$, and zero in all other cases. In particular, for $n=0$ we have $\cCat_S(;(x,x))=\unit$ for each $x \in S$, providing the $\cCat_S$-algebras with the necessary identity arrows.\end{example}

\begin{example}[Operad Homorphisms]Let $\cP$ be an arbitrary operad.  There exists a colored operad $\cP^1$ on a set $\{0,1\}$ of two colors, whose algebras are triples $(A_0,A_1,f)$ where $A_0$ and $A_1$ are $\cP$-algebras, and $f:A_0 \rightarrow A_1$ is a map of $P$-algebras.  Explicitly,$$\cP^1(i_1,\dots,i_n;i)=\begin{cases}\cP(n)&\text{if }\max(i_1,\dots,i_n)\leq i;\\0&\text{otherwise}.\end{cases}$$The structure maps of $\cP^1$ are induced by those of $\cP$ (for $n=0$, we agree that $\max(i_1,\dots,i_n)=-1)$.  Given a $\cP^1$-algebra on two objects $A_0$ and $A_1$, the objects $\cP(0,\dots,0;0)$ and $\cP(1,\dots,1;1)$ give $A_0$, respectively $A_1$, their $\cP$-algebra structure; furthermore, $\unit \rightarrow \cP(1)$ corresponds to a map $\alpha:\unit \rightarrow \cP^1(0;1)$ giving a map of $\cP$-algebras $f:A_0\rightarrow A_1$. This colored operad has been discussed extensively in the context of chain complexes by Markl \cite{Markl} and plays a key role in describing the derived mapping spaces between multicategories in our paper~\cite{Me2}. \end{example} 

A morphism between two $\C$-enriched, symmetric multicategories $F:\cP\longrightarrow\cQ$, or \emph{multifunctor}, consists of a \emph{set map} of objects $F_{0}:\obj(\cP)\longrightarrow\obj(\cQ)$ together with a family of $\Sigma_n$-equivariant $\C$-morphisms$$\{F:\cP(d_{1},...,d_{n};d)\longrightarrow\cQ(F(d_{1}),...,F(d_{n});F(d))\}_{d_{1},...,d_{n},d\in\cP}$$which are compatible with the composition structure maps. We denote the category of all small symmetric multicategories enriched in $\C$ by $\multi(\C)$. When $\C=\Sets$ we will write $\multi$ rather than $\multi(\Sets)$.\\

\subsection{Enrichment of $\multi$ Over $\cCat$}Multicategories are often called colored operads, or just operads(cf.~\cite{BM06, BV, May, CGMV}, etc.), but we use the term multicategory in this paper because we want to emphasize the relationship between multicategory theory with classical category theory. Informally, we can say that inside every multicategory lies a category which makes up the linear part (i.e. the $1$-operations) of that multicategory. We make this explicit by assigning to each multicategory $\cP$ a category $[\cP]_{1}$ with the same object set as $\cP$ and with morphisms given by $[\cP]_{1}(p,p')=\cP(p;p')$ for any two objects $p,p'$ in $\cP$ (i.e. just look at the operations of $\cP$ which have only one input). The functor $[-]_{1}$ takes all higher operations, i.e. $\cP(p_1,...,p_n;p)$, to be trivial. Composition and identity operations are induced by $\cP$.\\

This relationship with category theory is useful in making sense of ideas which do not have obvious meaning in the multicategory setting. As an example, we will want to identify the ``components'' of a multicategory $\cP$ and we will need a way to say that an $n$-ary operation $\phi$ is ``an isomorphism in $\cP$.'' This is where the relationship between categories and multicategories can be useful, we can say that $\phi$ is an \emph{isomorphism} in $\cP$ if $[\phi]_{1}$ is an isomorphism in the category $[\cP]_{1}$.\\ 

\begin{definition}\label{fullyfaithful}Let $\cP$ and $\Q$ be two multicategories. A multifunctor $F:\cP\rightarrow\Q$ is \emph{essentially surjective} if $[F]_{1}$ is essentially surjective as a functor of categories. We say that $F$ is \emph{full} if for any sequence $p_1,...,p_n,p$ the function $F:\cP(p_1,...,p_n;p)\rightarrow\Q(Fp_{1},...,Fp_{n};Fp)$ is surjective. We say that $F$ is \emph{faithful} if for any sequence $p_1,...,p_n,p$ the function $F:\cP(p_1,...,p_n;p)\rightarrow\Q(Fp_{1},...,Fp_{n};Fp)$ is injective. The multifunctor $F$ is called \emph{fully faithful} if it is both full and faithful.\end{definition}

\begin{definition}Let $F:\cP\rightarrow\Q$ be a functor between two symmetric multicategories. We say that $F$ is an \emph{equivalence of multicategories} if, and only if, $F$ is both fully faithful and essentially surjective.\end{definition}

\begin{definition}Given a multicategory $\mathcal{P}$ with $\obj(\cP):=S$ and a set map $F_0:T\longrightarrow S$, we construct a \emph{pullback} multicategory $F^{*}(\mathbb{\mathcal{P}})$ with object set $T$ whose operations are given by\begin{equation*}\label{pullback multicategory}F^{*}(\mathcal{P})(d_{1},\cdots,d_{n};d):=\mathcal{P}(Fd_{1},...,Fd_{n};Fd).\end{equation*}\end{definition}
 
The functor $[-]_{1}$ admits a left adjoint, denoted by $\Xi(-)$, which takes a category $\C$ to a multicategory $\Xi(\C)$ with $\obj(\Xi\C):=\obj(\C)$. The linear operations are just the composition maps of $\C$, i.e. $\Xi\C(c; c'):=\C(c, c'),$ and the higher operations are all trivial, i.e. $\Xi\C(c_1,...,c_n; c) =0.$ Composition and units are induced from $\C$ in the obvious way, and it is an easy exercise to check the necessary axioms. We apply the \emph{symmetrization functor}, $\underline{S},$ (see next paragraph) to make $\Xi\C$ into a symmetric multicategory, but since all non-trivial operations in $\Xi\C$ are unary (they have only one input), the symmetric groups $\Sigma_n$ can have only trivial actions. So really $\Xi\C$ is both a symmetric and a non-symmetric multicategory.\\

The \emph{symmetrization functor} is the left adjoint to the functor which forgets symmetric group actions, and is given as follows:\begin{equation*}\underline{S}(\Xi\C)(x_1,\ldots,x_n;x) =\coprod_{\sigma\in\Sigma_n}\Xi\C(x_{\sigma^{-1}(1)},\ldots, x_{\sigma^{-1}(n)};x),\end{equation*} so a $n$-operation in $\underline{S}(\Xi\C)(x_1,\ldots,x_n;x)$ consists of an ordered pair $(\phi,\sigma)$ where $\sigma\in\Sigma_n$ and$$\phi\in\cP(x_{\sigma^{-1}(1)},\ldots, x_{\sigma^{-1}(n)};x).$$ We always let $\Sigma_n$ act on the right via the natural group action on the symmetric group coordinate and define composition via the equivariance requirements for a multicategory. We leave it as an exercise to check that $\underline{S}(\Xi\C)$ satisfies the requirements for a multicatgory and that $\underline{S}$ is a left adjoint to the functor which forgets symmetric group actions.\\

\begin{remark}Given categories $\C$ and $\D$, that $F:\C\longrightarrow\D$ is an equivalence of categories if, and only if, $\Xi(F):\Xi\C\longrightarrow\Xi\D$ is an equivalence of multicategories. It is also an easy fact to check that if $F:\cP\longrightarrow\Q$ is an equivalence of multicategories then $[F]_{1}:[\cP]_{1}\longrightarrow[\cQ]_1$ is an equivalence of categories. The opposite statement, however, is not always true (See, for example,~\cite{St}).\end{remark}

\section[Multicategories As Monoids]{Multicategories As Monoids}Operads are monoids in the category of symmetric sequences. A \emph{symmetric sequence} in a symmetric monoidal category $\C$ is a sequence of objects in $\C$, $K = \{K_0, K_1,...,K_n,...\}_{n\in\mathbb{N}}$ where each of the $K_n$ in $\C$ is equipped with an action of the symmetric group $\Sigma_n$. A morphism of $\Sigma_*$-objects $f: K\rightarrow L$ is a sequence of morphisms $f(n): K(n)\rightarrow L(n)$ in the category $\C$ which commute with the group actions. The category of symmetric sequences in $\C$ is denoted by $\C^{\Sigma_*}$.\\

There are two monoidal products on $\C^{\Sigma}$. The first is a symmetric monoidal tensor product given by:\begin{equation*}\label{tensorofsequences}(K\otimes L)(n) = \bigoplus_{p+q=n} (\unit[\Sigma_n]\otimes K(p)\otimes L(q))_{\Sigma_p\times\Sigma_q},\end{equation*}where here $\unit[\Sigma_n]$ is our notation for the sum of $\Sigma_n$-copies of the unit object of~$\C$ and $\Sigma_p\times\Sigma_q\subset\Sigma_{p+q}$ is the group embedding which sends permutations $\sigma\in\Sigma_p$ (respectively, $\tau\in\Sigma_q$) to permutations of the subset $\{1,\dots,p\}\subset\{1,\dots,p,p+1,\dots,p+q\}$ (respectively, $\{p+1,\dots,p+q\}\subset\{1,\dots,p,p+1,\dots,p+q\}$). The group $\Sigma_p\times\Sigma_q$ acts on $\unit[\Sigma_n]$ by translations on the right and the co-invariant quotient makes this right $\Sigma_p\times\Sigma_q$-action agree with the left $\Sigma_p\times\Sigma_q$-action on $K(p)\otimes L(q)$. Thus the object $K(p)\otimes L(q)$ is a $\Sigma_p\times\Sigma_q$-object in $\C$ for all $p,q\ge0$. The group $\Sigma_n$ acts also on $\unit[\Sigma_n]$ by translation on the left. This left $\Sigma_n$-action induces a left $\Sigma_n$-action on $(K\otimes L)(n)$ and determines the $\Sigma_*$-object structure of the collection $\{(K\otimes L)(n)\}_{n\in\mathbf{N}}$.\\

A symmetric sequence is called \emph{trivial}, or \emph{constant}, if $K(r) = 0$ for all $r>0$. The sequence, $\unit,$ given by\begin{equation*}\unit(n) = \begin{cases} \unit, & \text{if $n=0$}, \\ 0, & \text{otherwise}, \end{cases}\end{equation*}defines the unit for the tensor product of symmetric sequences. The associativity of the tensor product of $\Sigma_*$-object is inherited from the base category, and the symmetry isomorphism $\tau(C,D): C\otimes D\rightarrow D\otimes C$ uses the symmetric from $\C$, $C(p)\otimes D(q)\simeq D(q)\otimes C(p),$ and translations by block transposition in $\unit[\Sigma_n]$. We leave the details as an exercise.\\

This symmetric monoidal product is closed, i.e. there is an external hom-object, denoted by $\Hom_{\C^{\Sigma_*}}(K,L)$, for all $K,L\in\C^{\Sigma_*}$ defined by a product\begin{equation*}\Hom_{\C^{\Sigma_*}}(K,L) = \prod_{n=0}^{\infty}\Hom_{\C}(K(n),L(n))^{\Sigma_n}.\end{equation*}Here $\Hom_{\C}(C(n),D(n))^{\Sigma_n}$ denotes the invariant sub-object of $\Hom_{\C}(C(n),D(n))$ under the action of the symmetric group. 

The category $\C^{\Sigma_{*}}$ also admits a \emph{non-symmetric} monoidal product, which we will call the \emph{composition product}, or the \emph{circle product.} The composition product $K\circ L$ is a generalized symmetric tensor construction given by considering the co-invariants of the tensor products $K(r)\otimes L^{\otimes r}$ under the action of the symmetric groups $\Sigma_r$: \begin{equation*}\label{circleproduct} K\circ L = \bigoplus_{r=0}^{\infty} (K(r)\otimes L^{\otimes r})_{\Sigma_r}\end{equation*} where we use the internal tensor product of $\C^{\Sigma_*}$ to form the tensor power $L^{\otimes r}$, and the external tensor product to form the object $K(r)\otimes M^{\otimes r}.$ We should mention that we are assuming the existence of colimits in $\C$ to form the co-invariant object $(K(r)\otimes L^{\otimes r})_{\Sigma_r}$.\\

The category of symmetric sequences with the circle product forms a monoidal category $(\C^{\Sigma_*},\circ, I)$. An \emph{operad} in $\C$ is a monoid in $(\C^{\Sigma_*},\circ, I)$. 

\subsection[Collections]{Collections}Informally, a collection is a symmetric sequence with ``many objects.'' There is not a direct algebraic analogy, but a collection is similar in concept to that of a quiver in Lie theory, in the sense that a quiver is a directed graph serving as the domain of a representation (defined as a functor).\\

Let $S$ be a set. A \emph{collection in $\C$ on the set $S$} is a family of $\C$-objects $K(x_1,...,x_n; x)$ for each sequence $x_1,...x_n;x\in S$ and each $n\ge0$. A morphism of collections, $F:(S,K)\rightarrow(S',K')$, consists of a map of sets $f:S\rightarrow S'$ and for each $n\ge 0$ and each sequence $x_1,...x_n;x$ in $S$ a family of $\C$-morphisms $\{f_{n}:K(x_1,...,x_n;x)\rightarrow K'(fx_1,..., fx_n;fx)\}_{n\ge0}$. We denote by $\coll(\C)$ the category of all collections in $\C$. If we fix the object set $S$, we can consider the category of all collections with $S$-objects, which we denote by $\coll(\C)_{S}$.\footnote{Note that in the category of $\C$-collections with fixed sets of objects, a morphism $F:(S,K)\longrightarrow (S,K')$ is the identity map on objects.} A collection $K$ is called \emph{pointed} if it is equipped with unit maps $1_x:\unit\to K(x;x)$, for all $x\in S$. We have a similar notion of a \emph{collection with symmetric action}, where each $K(x_1,...,x_n;x)$ comes equipped with a $\Sigma_{n}$-action. If $K$ is a collection with symmetric action in $\C$, and $S=\{\star\}$, then $K$ is just a symmetric sequence in $\C$. We denote the category of collections (with or without symmetric action) with $S$-objects by $\coll(\C)_{S}$ and the category of pointed collections with $S$ objects by $\coll(\C)^*_{S}$.\\ 

As with symmetric sequences, the (symmetric) monoidal product of the enriching category $(\C, \otimes, \unit)$ induces a pointwise \emph{tensor product} on $\coll(\C)_{S}.$ The category $\coll(\C)_{S}$ with this pointwise tensor product forms a closed symmetric monoidal category over $\C$. There is an additional monoidal product on $\coll(\C)_{S}$, called the \emph{circle product}, also known as the \emph{composition product}, which is associative, but (highly) non-commutative. This monoidal structure defines a \emph{right closed} monoidal structure over $\C$. For explicit descriptions of this product, see Berger-Moerdijk~\cite[Appendix]{BM07}.  For a set of objects $S$, we define the category of \emph{$\C$-enriched multicategories with fixed object set $S$}, or \emph{$S$-colored operads}, as the category of unitary, associative monoids in $\coll(\C)_{S}$ with respect to the composition product.\\

\section{The Homotopy Theory of $\multi$}One of the most basic examples of a model structure is the  standard model category structure on $\cCat$, the category of all small (non-enriched) categories (See~\cite{RezkCat}).

\begin{theorem}The category $\cCat$ admits a cofibrantly generated model category structure where:
\begin{itemize}
  \item the weak equivalences are the categorical equivalences;
  \item the cofibrations are the functors $F:\cC\rightarrow\cD$ which are injective on objects;
  \item the fibrations are the functors $F:\cC\rightarrow\cD$ with the property that for each object $c$ in $\cC$ and each isomorphism $f:Fc\rightarrow d$ in $\cD$ there exists a $c'$ in $\C$ and an isomorphism $g:c\rightarrow c'$ in $\cC$ such that $F(g)=f$.
\end{itemize}\end{theorem}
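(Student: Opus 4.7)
The plan is to apply Kan's recognition theorem for cofibrantly generated model categories with explicit generating sets, exploiting that $\cCat$ is locally finitely presentable so all smallness hypotheses are automatic. The class $W$ of categorical equivalences is readily checked to be closed under two-out-of-three and retracts, so the bulk of the argument reduces to choosing the right generators and computing the associated lifting classes.

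For generating cofibrations $I$, I would take the three maps:
\begin{itemize}
\item $\emptyset\to [0]$, the inclusion of the empty category into the terminal category;
\item $[0]\sqcup[0]\to [1]$, the inclusion of the two endpoints into the walking arrow;
\item $P\to [1]$, the surjection from the category $P$ with two parallel arrows $0\rightrightarrows 1$ onto $[1]$ identifying them.
\end{itemize}
For generating trivial cofibrations $J$, I would take the single map $[0]\to J_{\mathrm{iso}}$ picking out one object of the walking isomorphism $J_{\mathrm{iso}}$ (the groupoid with two objects and a unique iso between them). A routine lifting-property calculation then identifies $I$-inj with the class of functors that are surjective on objects, full, and faithful (equivalently, surjective equivalences), and identifies $J$-inj with precisely the isofibrations described in the theorem.

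The main verifications I would carry out are: first, that every $J$-cell complex is both a cofibration and a categorical equivalence, cofibrancy being immediate from injectivity on objects and the equivalence property following from the existence of a canonical retraction collapsing each freshly adjoined object onto its isomorphic partner; second, that every $I$-injective is an isofibration and an equivalence, which is transparent from the description of $I$-inj; and third, the reverse inclusion $J$-inj $\cap\, W\subseteq I$-inj, which I would verify as follows: given an isofibration $F:\cC\to\cD$ that is essentially surjective and fully faithful, for each $d\in\cD$ choose an iso $f:Fc\to d$ given by essential surjectivity and lift it via the isofibration property to an iso $g:c\to c'$ in $\cC$ with $Fc'=d$, yielding surjectivity on objects; fullness and faithfulness are already part of the equivalence hypothesis.

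The main obstacle I expect is verifying that the class of injective-on-objects categorical equivalences is preserved under transfinite composition, needed in order to conclude that every $J$-cell map is a weak equivalence. I would handle this by exhibiting, for each pushout along $[0]\to J_{\mathrm{iso}}$, an explicit retraction collapsing the adjoined object onto its isomorphic image in the preceding category, and then arguing stability under transfinite composition by an objectwise filtered colimit comparison (equivalently, by observing that the nerve functor sends each $J$-cell attachment to a trivial cofibration of simplicial sets, and then invoking the standard closure of simplicial weak equivalences under sequential colimits). Once this is in place, Kan's theorem delivers the model structure and simultaneously verifies the explicit description of the three distinguished classes.
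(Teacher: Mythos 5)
The paper never actually proves this theorem: it is stated with a citation to Rezk's notes, and the proof technique the paper itself exhibits is for the analogous theorem on $\multi$, namely a direct verification of the axioms M1--M5 (hand-built lifts in the two halves of the lifting axiom, using a pseudo-inverse of the trivial cofibration in one case and fullness plus surjectivity on objects in the other, and an explicit mapping-cylinder-style factorization), with cofibrant generation deduced only afterwards from a separate proposition characterizing trivial fibrations by the right lifting property against the set $\{\Xi(\emptyset)\hookrightarrow\Xi(I)\}\cup\{G_n[I']\to G_n[I]\}\cup\{G_n[\cH]\to G_n[I]\}$. Your proposal is a genuinely different architecture for the same result: you take what are essentially the categorical ($n=1$) instances of those same generators, add the walking-isomorphism inclusion $[0]\to J_{\mathrm{iso}}$ as generating trivial cofibration, and invoke Kan's recognition theorem, with smallness free from local finite presentability and factorizations supplied by the small object argument rather than built by hand. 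The trade-off is clear: the paper's route produces explicit factorizations and never needs to know that equivalences are closed under transfinite composition, while yours obtains existence and cofibrant generation in a single stroke, at the cost of exactly that closure property plus the computation of $I$-inj and $J$-inj.

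Two repairs are needed. First, Kan's theorem outputs cofibrations defined as the maps with the left lifting property against $I$-inj; the theorem asserts they are precisely the injective-on-objects functors, and this identification is not automatic. One inclusion holds because each generator is injective on objects and that class is closed under pushout, transfinite composition and retracts; the converse requires showing that every injective-on-objects functor lifts against every surjective equivalence -- the same explicit choice argument (lift objects using injectivity and surjectivity, then lift arrows using fully faithfulness) that the paper performs in the first half of its M4 verification for $\multi$. Second, your parenthetical nerve argument is not valid: a functor whose nerve is a weak homotopy equivalence need not be an equivalence of categories. Indeed $[0]\hookrightarrow[1]$ is injective on objects and fully faithful, and its nerve $\Delta^0\to\Delta^1$ is a trivial cofibration of simplicial sets, yet it is not a categorical equivalence; so closure of simplicial weak equivalences under sequential colimits cannot be transported back to $\cCat$. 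Your primary argument should be kept instead: filtered colimits in $\cCat$ are computed on object sets and arrow sets, so for $x,y$ in the base of the transfinite composition one has $\cC_\lambda(x,y)=\colim_\alpha\cC_\alpha(x,y)$, each map in the system being a bijection, which gives fully faithfulness of the composite; essential surjectivity then follows by transfinite induction, since every object of a limit stage already appears at some earlier stage.
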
 

One can use the relationship between $\multi$ and $\cCat$ to construct a cofibrantly generated model category structure on the category of all small symmetric multicategories. The proof we present below appears in several places, but we believe the first occurrence is in the thesis of Weiss~\cite{Weiss}. 

\begin{theorem}The category $\multi$ admits a model category structure where: 
\begin{itemize}
  \item the weak equivalences are the equivalences of multicategories;
  \item the cofibrations are those functors of multicategories $F:\cP\rightarrow\cQ$ which are injective on objects;
  \item the fibrations are those functors of multicategories $F:\cP\rightarrow\cQ$ with the property that for each object $x$ in $\cP$ and for each isomorphism $\phi:F(x)\rightarrow q$ in $\cQ$, there exists an isomorphism $\psi: x\rightarrow x'$ for which $F(\psi)=\phi.$\end{itemize}\end{theorem}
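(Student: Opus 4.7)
The plan is to prove this by a cofibrantly generated model category argument modeled on the proof for $\cCat$, with the generators enlarged to cover all arities.

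First, I would specify the generating sets. For cofibrations, let $\I$ consist of the empty inclusion $\emptyset \hookrightarrow *$ together with, for each $n \geq 0$, the inclusion $\partial M_n \hookrightarrow M_n$, where $M_n$ denotes the free symmetric multicategory on a single $n$-ary generating operation (having $n+1$ colors and the free $\Sigma_n$-orbit of that generator) and $\partial M_n$ is the discrete multicategory on the same $n+1$ colors. For trivial cofibrations, let $\J$ consist of the single map $* \hookrightarrow \Xi(\mathbf{I})$, where $\mathbf{I} \in \cCat$ is the walking isomorphism. The category $\multi$ is monadic over the presheaf category $\coll(\Sets)$ via the free symmetric multicategory monad, and hence is locally presentable; in particular it is complete, cocomplete, and every object is small. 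The small object argument then produces the required functorial factorizations.

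Second, I would identify the lifting classes. Direct inspection shows that a multifunctor $F: \cP \rightarrow \cQ$ has RLP against $\I$ iff it is surjective on colors and bijective on each operation set, which is exactly the proposed notion of trivial fibration (essentially surjective, fully faithful, and isomorphism-lifting, since surjectivity on colors is stronger than the latter). A multifunctor has RLP against $\J$ iff it has the stated isomorphism-lifting property, i.e.\ iff it is a fibration. The 2-out-of-3 and retract properties of weak equivalences, together with the inclusion $\I\text{-cell} \subseteq$ cofibrations, follow by routine verification using that essential surjectivity and full faithfulness are stable under composition.

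The main obstacle is the usual ``pushout of isomorphism adjunction'' step: showing that for any multifunctor $* \rightarrow \cP$, the pushout map $\cP \rightarrow \cP \cup_* \Xi(\mathbf{I})$ is a weak equivalence, and that this is preserved under transfinite composition. One computes this pushout explicitly using the description of $\multi$ as algebras over the free symmetric multicategory monad on $\coll(\Sets)$: the operation sets of $\cP \cup_* \Xi(\mathbf{I})$ are colimits over rooted trees whose edges are colored by objects of $\cP$ or the new object $y$ adjoined via $\Xi(\mathbf{I})$, with internal edges freely decorated by the adjoined isomorphism or its inverse. Essential surjectivity is immediate since $y$ is isomorphic in $[\cP \cup_* \Xi(\mathbf{I})]_1$ to the base point of $\cP$. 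Full faithfulness requires an inductive argument on tree complexity: by pushing the freely adjoined isomorphisms through tree composites and cancelling inverse pairs, every operation of the pushout reduces uniquely to an operation of $\cP$ modulo the colimit relations. Once this reduction and its compatibility with transfinite composition are in hand, the cofibrantly generated recognition criterion completes the proof.
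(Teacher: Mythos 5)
Your overall strategy (Kan's recognition theorem with generators in every arity) is viable, and it is genuinely different from the paper's proof, which verifies the axioms M1--M5 directly --- constructing lifts and an explicit mapping-cylinder style factorization by hand --- and only afterwards establishes cofibrant generation in a separate proposition. But as written your argument has a genuine gap: your generating set $\I$ does not detect faithfulness, so your claim that RLP against $\I$ forces $F$ to be \emph{bijective} on each operation set is false. Lifting against $\partial M_n \hookrightarrow M_n$ asserts only the \emph{existence} of a preimage operation, never its uniqueness, so RLP against your $\I$ is exactly ``surjective on colors and full.'' Concretely, let $\cP = \Xi(B\mathbb{N})$ be the (symmetrized) multicategory with one object, unary operations the monoid $(\mathbb{N},+)$, and no other operations, and let $\cQ = \Xi(\ast)$. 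The unique multifunctor $\cP \to \cQ$ is surjective on objects and full, and it has the RLP against $\emptyset \hookrightarrow \ast$ and against every $\partial M_n \hookrightarrow M_n$ (for $n \neq 1$ there are no lifting squares at all; for $n = 1$ any element of $\mathbb{N}$ provides a lift), yet it is not faithful, hence not an equivalence of multicategories. It is moreover a fibration, so the identity ``$\I$-inj $=$ $W \cap \J$-inj'' required by the recognition theorem fails, where $W$ is the class of equivalences; the model structure your generators would produce is not the one in the statement.

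The repair is exactly the extra family of generators the paper uses: for each $n \geq 0$, add the co-fold map $M_n \coprod_{\partial M_n} M_n \longrightarrow M_n$ identifying two parallel free $n$-ary generators (this is the paper's $G_n[\cH] \longrightarrow G_n[I]$, while your $\partial M_n \hookrightarrow M_n$ and $\emptyset \hookrightarrow \ast$ correspond to its $G_n[I'] \longrightarrow G_n[I]$ and $\Xi(\emptyset) \hookrightarrow \Xi(I)$). RLP against the co-fold map is precisely injectivity on $n$-ary operation sets, so with it included, $\I$-inj becomes ``surjective on colors and fully faithful,'' which does coincide with the trivial fibrations. You should also make explicit that $\I$-cof is exactly the class of multifunctors injective on objects: the nontrivial inclusion is that every object-injective multifunctor lifts against every trivial fibration, which is the lifting argument the paper carries out in its M4 step. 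With those two additions, your remaining steps --- monadicity over collections for smallness, $\J = \{\ast \to \Xi(\mathbf{I})\}$ characterizing fibrations, and the pushout/tree-reduction argument showing that relative $\J$-cell complexes are weak equivalences (the non-enriched analogue of the paper's explicit pushout computation in its Claim 4) --- assemble into a correct proof.
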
 

A multifunctor $F:\cP\rightarrow\cQ$ is a fibration (respectively, cofibration) of multicategories if, and only if, the functor $[F]_{1}$ is a fibration (respectively, cofibration) of categories. In addition, a multifunctor $F:\cP\rightarrow\cQ$ is a trivial fibration if, and only if, the function $\obj(F):\obj(\cP)\rightarrow\obj(\cQ)$ is surjective and $F$ is fully faithful (as a multifunctor). One can think of the fibration condition as being a path-lifting condition for the ``paths'' in $\cQ$ which are isomorphisms.

\begin{proof}Like the model structure for the category of all small categories, we will prove the existence of the model structure on $\multi$ by directly verifying the axioms. 

M1: In their paper~\cite{EM}, Elmendorf-Mandell show that $\multi$ is cocomplete.

M2 and M3: It is easy to verify that if two out of three multi-functors $F$, $G$, $FG$ are equivalences of multicategories than the third is as well. It is also any easy verify that the weak equivalences and cofibrations are closed under retracts. We can then use the fact that fibrations are characterized by a lifting property to conclude that fibrations are also closed under retracts.\\

M4: (Lifting) Consider the following square:
$$\begin{CD}
\cP @>F>> \cR\\
@VIVV     @VVPV\\
\cQ @> G >> \cS
\end{CD}$$where $I$ is a cofibration and $P$ is a fibration. We need to prove the existence of a lift $H$ which makes the diagram commute whenever either $I$ or $P$ is also a weak equivalence. Let us first assume that $P$ is a trivial fibration. Because the function $I_{0}$ is injective (since $I$ is a cofibration) and the function $P_{0}$ is surjective (since $P$ is a fibration) we can define a lift $H_{0}$ at the level of objects. In order to extend this map to the higher arities, we now choose an element $\phi$ in the set $\cQ(x_1,...,x_n;x)$ and consider its image $G(\phi)$ in $\cS(Gx_1,...,Gx_n;Gx)$. Since $P$ is a fully faithful multifunctor and $Hx=G$ on the level of objects, we know that the function $P:\cR(Hx_1,...,Hx_n;Hx)\longrightarrow\cS(Gx_1,...,Gx_n;Gx)$ is an isomorphism. We now define $H(\phi)=x^{-1}(G(\phi))$. One can check that this extends $H$ into the desired lift, and that this extension is unique.\\

Let us now assume that $I$ is a trivial cofibration. Since $I$ is an equivalence of multicategories we may construct a functor $I'$ such that $I'\circ I=id_{\cP}$ together with a natural isomorphism $\alpha: I\circ I'\rightarrow id_{\cQ}$. If we restrict ourselves to the image of $I$ then we can choose $\alpha$ in such a way so that for each object $x$ in $\obj(\cP)$ the component at $IP$ is $\alpha_{Ix}=id_{Ix}$.\\

For any object $x$ in $\cQ$ we have a corresponding object $GII'x$ in  in the image of $P$, i.e. $GII'x=PFI'x$, and $\alpha_{x}:GII'x\rightarrow Gx$ an isomorphism. Since the multifunctor $P$ is a fibration, we know that there exists an object $Hx$ and an isomorphism $\beta_{x}:FI'x\rightarrow Hx$ in the image of $H$ such that $PHx=Gx$ and $P(\beta_x)=G(\alpha_{x})$. If we restrict to objects in $\Q$ which are in the image of $I$, then $HIx=Fx$ and $\beta_{Ix}=id_{Fx}.$ It follows that $H$ gives a lift on objects.\\

Now, let $\phi$ be an $n$-operation in $\cQ(x_1,...,x_n;x)$. Define $H$ as\begin{equation*}\cR(Hx_1,...,Hx_n;FI'x)\buildrel{\beta^{-1}_{x_1},...,\beta^{-1}_{x_n}}\over\longrightarrow\cR(FI'x_1,...,FI'x_n;FI'x)\buildrel{\beta_{x}}\over\longrightarrow\cR(FI'x;Hx).\end{equation*} It is now easily checked that $H$ gives the desired lift.\\

MC5: Given a multifunctor $F:\cP\longrightarrow\cQ$ we want to factor $F$ as $I\circ P$, with $I$ a cofibration and $P$ a trivial fibration. We can construct a multicategory $\cQ'$ with objects $\obj(\cP)\times\obj(\cQ)$ and operations$$\cQ'(x_1,...,x_n;x):=\cQ(\delta(x_1),...,\delta(x_n);\delta(x))$$where $\delta(x)=x$ for $x$ in $\cQ$ and $\delta(x)=Fx$ for $x$ in $\cP.$ A multifunctor $I:\cP\longrightarrow\cQ'$ by letting $I$ be the identity on objects, i.e $Ix=x.$ For $\phi$ an element in $\cP(x_1,...,x_n;x)$ let $I(\phi)=F(\phi)$. One can check that this is a well defined multifunctor, and a cofibration of multicategories. Define the multifunctor $P:\cQ'\longrightarrow\cQ$ on objects by taking $Px=\delta(x)$ and by letting $P$ be the identity on arrows, i.e. for $\phi$ in $\cQ'(x_1,...,x_n;x)$ let $P(\phi)=\phi$. This multifunctor is clearly fully faithful and surjective on objects, and thus a trivial fibration. The case where $F$ factors as a fibration followed by a trivial cofibration is similar. \end{proof}

Notice that all multicategories are both fibrant and cofibrant under this model structure. 

\begin{prop}The adjunction $[-]_1:\multi\leftrightarrows\cCat:\Xi$ is a Quillen adjunction.\end{prop}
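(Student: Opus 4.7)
The plan is to verify the Quillen adjunction condition by showing that the left adjoint $[-]_1$ preserves cofibrations and trivial cofibrations. This is the cleaner of the two equivalent reformulations, because the cofibrations in both $\multi$ and $\cCat$ are described identically (as functors/multifunctors injective on objects, per the two theorems above) and because $[-]_1$ acts as the identity on object sets.

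For cofibrations the argument is immediate: by construction $\obj([\cP]_1) = \obj(\cP)$ and $[F]_1$ acts on objects exactly as $F$ does, so a multifunctor $F:\cP\to\cQ$ that is injective on objects goes to a functor $[F]_1$ injective on objects. For trivial cofibrations we additionally need $[F]_1$ to be an equivalence of categories whenever $F$ is an equivalence of multicategories, and this is precisely the implication recorded in the remark at the end of Section 1: essential surjectivity of $F$ was defined through $[F]_1$ to begin with, and the arity-one piece of full-faithfulness of $F$ at every arity is exactly the full-faithfulness of $[F]_1$. Combining the two observations gives that $[-]_1$ sends trivial cofibrations to trivial cofibrations.

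The main obstacle, such as it is, is purely cosmetic---matching the defining conditions of an equivalence of multicategories to those of an equivalence of categories under $[-]_1$---and all genuine content has already been extracted in Section 1. Should one prefer the dual formulation, showing that the right adjoint $\Xi$ preserves fibrations and trivial fibrations is equally short via the natural isomorphism $[\Xi\cC]_1 \cong \cC$, under which the iso-lifting condition for $\Xi F$ in $\multi$ unwinds to the iso-lifting condition for $F$ in $\cCat$, and likewise for the fully-faithful-and-surjective-on-objects description of trivial fibrations.
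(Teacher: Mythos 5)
Your individual sub-arguments are all correct, but they are attached to the wrong halves of the Quillen condition, because you have the adjunction backwards. Despite the ordering in the statement, $\Xi$ is the \emph{left} adjoint and $[-]_1$ is the \emph{right} adjoint: Section 1 of the paper says explicitly that ``the functor $[-]_1$ admits a left adjoint, denoted by $\Xi(-)$,'' and this is forced by the definitions. Indeed, a multifunctor $\Xi\cC\rightarrow\cP$ is determined by a functor $\cC\rightarrow[\cP]_1$, since the higher operation sets of $\Xi\cC$ are empty (initial), giving $\Hom_{\multi}(\Xi\cC,\cP)\cong\Hom_{\cCat}(\cC,[\cP]_1)$; in the other direction $\Hom_{\multi}(\cP,\Xi\cD)$ is typically \emph{empty} (any nontrivial higher operation of $\cP$ would have to map into an empty set), so $[-]_1$ cannot be left adjoint to $\Xi$. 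Consequently, proving the proposition requires showing that $\Xi$ preserves cofibrations and trivial cofibrations, or equivalently that $[-]_1$ preserves fibrations and trivial fibrations. What you verified is precisely the complementary pair --- $[-]_1$ on (trivial) cofibrations in your main argument, $\Xi$ on (trivial) fibrations in your ``dual formulation'' --- which is the Quillen condition for the nonexistent adjunction $[-]_1\dashv\Xi$ and does not formally imply the statement.

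The repair is immediate, which is why this is a mild rather than fatal flaw: your two observations are perfectly symmetric in the two functors. $\Xi$ also leaves object sets unchanged, so it preserves cofibrations; and the remark you cite is an if-and-only-if for $\Xi$ ($F$ is an equivalence of categories iff $\Xi F$ is an equivalence of multicategories), so $\Xi$ preserves weak equivalences and hence trivial cofibrations. Alternatively, the paper's own characterization that $F$ is a fibration of multicategories iff $[F]_1$ is a fibration of categories, combined with the implication that $[F]_1$ is an equivalence whenever $F$ is, shows that $[-]_1$ preserves fibrations and trivial fibrations. The paper's proof sidesteps all of this bookkeeping by recording the much stronger fact that \emph{both} $\Xi$ and $[-]_1$ preserve all three classes --- cofibrations, fibrations, and weak equivalences --- which subsumes every version of the Quillen condition regardless of which functor is the left adjoint; you should either do the same or swap your assignment of adjoints.
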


\begin{proof} It is straightforward to verify the (much stronger) property that both $\Xi$ and $[-]_{1}$ preserve fibrations, cofibrations, and weak equivalences.\end{proof}

We will now show that the $\multi$ is a cofibrantly generated model category, and the explicit description of the generating cofibrations will be important for our description of the generating cofibrations of simplicially enriched multicategories. Let $\emptyset$ be the initial category with no objects, let $I$ be the category with two objects and a single identity map between them, and let $I'$ be the maximal subcategory of $I$ with the same objects but excluding the map between them. Let $\cH=I\coprod_{I'}I$. The following proposition, together with the fact that fibrations are characterized by a right lifting property, implies that the model category structure on $\multi$ is cofibrantly generated. 

\begin{prop}A multifunctor $F:\cP\longrightarrow\Q$ is a trivial fibration if, and only if, $F$ has the right lifting property with respect to the set of multifunctors\begin{equation*}\{\Xi(\emptyset)\hookrightarrow\Xi(I)\}\bigcup\{G_n[I']\longrightarrow G_n[I] | n\ge 0\}\bigcup\{G_n[\cH]\longrightarrow G_n[I] | n \ge0\}\end{equation*}\end{prop}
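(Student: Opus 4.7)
The approach is a direct verification: translate each RLP condition into a concrete property of $F$, then combine with the characterization, stated right before the proposition, that a trivial fibration is exactly a multifunctor which is surjective on objects and fully faithful.

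First I would analyze the single generator $\Xi(\emptyset)\hookrightarrow\Xi(I)$. Since $\Xi(\emptyset)$ is the empty multicategory, a map out of it is vacuous, while a map $\Xi(I)\to\cQ$ is the data of two objects $q,q'$ of $\cQ$ together with a $1$-operation $q\to q'$. Taking in particular the identity $1$-operation $1_q$ for each $q\in\cQ$, the existence of a diagonal lift forces, for every $q$, an object $p\in\cP$ with $F(p)=q$. Conversely, once $F$ is surjective on objects and full (which the remaining generators will ensure), any such square admits a lift. Thus RLP against this generator contributes surjectivity on objects.

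Next I interpret the families $G_n[I']\to G_n[I]$ and $G_n[\cH]\to G_n[I]$ using the standard description of $G_n[X]$ as the free symmetric multicategory on a single $n$-ary operation whose inputs and output are labelled by the objects of $X$. A map $G_n[I']\to\cP$ therefore picks out a sequence $(x_1,\ldots,x_n;x)$ of objects in $\cP$ with no chosen operation, while a map $G_n[I]\to\cQ$ specifies an $n$-operation $\phi\in\cQ(y_1,\ldots,y_n;y)$; commutativity of the lifting square forces $y_i=F(x_i)$ and $y=F(x)$. The RLP condition then says that every such $\phi$ is the image of some element of $\cP(x_1,\ldots,x_n;x)$, and letting $n$ and the input-output sequence vary gives fullness of $F$ at every arity. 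Similarly, because $\cH=I\coprod_{I'}I$ represents a parallel pair of arrows, $G_n[\cH]$ is the free symmetric multicategory on two parallel $n$-operations $\phi,\psi$ between the same objects; a map $G_n[\cH]\to\cP$ is a choice of such a pair, commutativity of the square with $G_n[I]\to\cQ$ asserts $F(\phi)=F(\psi)$, and the lift through $G_n[I]$ produces a single operation in $\cP$ that both $\phi$ and $\psi$ must coincide with, i.e.\ $\phi=\psi$. Thus RLP against this family is exactly faithfulness of $F$.

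Taking the union of the three conditions, RLP against the whole set is equivalent to $F$ being simultaneously surjective on objects, full, and faithful, which is precisely the characterization of trivial fibrations recorded before the proposition. For the converse direction, if $F$ is a trivial fibration one checks each square case-by-case using exactly the same translations: surjectivity on objects handles the first generator, fullness handles the $G_n[I']\to G_n[I]$ family, and faithfulness handles the $G_n[\cH]\to G_n[I]$ family. The main technical point worth caring about is pinning down the free functor $G_n[-]$ on the category side: one needs the bijection between multifunctors $G_n[X]\to\cM$ and the indicated data in $\cM$ (with appropriate symmetric-group equivariance on the generating $n$-operation) to be really an adjunction-style correspondence, so that the RLP translations are strict equivalences rather than just implications. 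Once this description is in place, the rest of the proof is bookkeeping.
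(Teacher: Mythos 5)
Your proof is correct and takes essentially the same route as the paper's: identify maps out of $G_n[I]$, $G_n[I']$, and $G_n[\cH]$ with choices of (pairs of) $n$-operations, translate the RLP against each family into surjectivity on objects, fullness at every arity, and faithfulness at every arity respectively, and then invoke the characterization, stated just before the proposition, of trivial fibrations as the object-surjective fully faithful multifunctors. If anything you are slightly more thorough than the paper, whose written proof never explicitly treats the generator $\Xi(\emptyset)\hookrightarrow\Xi(I)$; your observation that lifting identity $1$-operations forces surjectivity on objects (with the converse using fullness) supplies that omitted step.
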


\begin{proof}\label{proofmodelstructurenonenriched}For each $n\ge1$ consider the multicategory $G_{n}[I]$ that has $n+1$ objects $\{0,1,...,n\}$ and operations generated by a single arrow from $(1,...,n)\mapsto 0$. Notice that a functor $G_{n}\longrightarrow\cP$ is just a choice of an $n$-operation in $\cP$.\\ 

Let $G_n[I']$ be the sub-operad of $G_n[I]$ which contains the same objects of $G_n[I]$ but only takes the identity operations.\footnote{At $n=1$ the operads $G_n[I]$ and $G_n[I']$ are just $I$ and $I'$, respectively.} It follows that a map $F:\cP\longrightarrow\Q$ has the right lifting property with respect to the inclusion $G_n[I']\hookrightarrow G_n[I]$ if, and only if, the function $F:\cP(x_1,...,x_n;x)\longrightarrow\Q(Fx_1,...,Fx_n;Fx)$is surjective for all $x_1,...,x_n;x$ in $\cP$. 

Now, consider the operad $G_n[\cH]$ which has $n+1$ objects $\{0,1,...,n\}$ generated by \emph{two} different arrows from $(1,...,n)\mapsto 0$. There is an obvious map $G_n[\cH]\longrightarrow G_n[I]$ identifying the two generating arrows of $G_n[\cH]$ with the generating arrow of $G_n[I]$.\footnote{In other words, the map induced by applying the free symmetric sequences construction~\ref{freesymmetricsequences} to the fold map $I\coprod_{I'}I\longrightarrow I$.} A multifunctor $F:\cP\longrightarrow\Q$ has the RLP with respect to $G_n[\cH]\longrightarrow G_n[I]$ if, and only if, the map $F:\cP(x_1,...,x_n;x)\longrightarrow \Q(Fx_1,...,Fx_n;Fx)$ is injective for any sequence $x_1,...,x_n;x$ in $\cP$. In other words, a multifunctor $F:\cP\longrightarrow\Q$ has the RLP (right lifting property) with respect to the set \begin{equation*}\{\Xi(\emptyset)\hookrightarrow\Xi(I)\}\bigcup\{G_n[I']\longrightarrow G_n[I] | n\ge 0\}\bigcup\{G_n[\cH]\longrightarrow G_n[I] | n \ge0\}\end{equation*} if, and only if, $F$ is a trivial fibration.\end{proof}

\begin{prop}The category of all small multicategories together is a monoidal model category with respect to the Boardman-Vogt tensor product.\end{prop}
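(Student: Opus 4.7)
The plan is to verify both the pushout-product axiom and the unit axiom for the Boardman-Vogt tensor product on $\multi$. The unit axiom is immediate: as noted in the remark following the preceding model structure theorem, every multicategory is both fibrant and cofibrant, so the unit of the Boardman-Vogt tensor product is its own cofibrant replacement and the comparison map into the tensor product with any $X$ is an isomorphism, hence a weak equivalence.

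For the pushout-product axiom, let $f: \cP \to \cQ$ and $g: \cR \to \cS$ be cofibrations, with corner map
$$f \square g \colon (\cQ \otimes \cR) \sqcup_{\cP \otimes \cR} (\cP \otimes \cS) \longrightarrow \cQ \otimes \cS.$$
First I would verify that $f \square g$ is a cofibration. Cofibrations in $\multi$ are precisely the multifunctors injective on objects, and the Boardman-Vogt tensor product acts on object sets as the Cartesian product in $\Sets$. Consequently, the map on objects induced by $f \square g$ is the pushout-product of two injections in $\Sets$, which is again an injection. Thus $f \square g$ is a cofibration.

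The main obstacle will be the trivial cofibration case: assuming in addition that $g$ is a weak equivalence, I must show $f \square g$ is too. My strategy reduces this to the following key lemma, to be proved separately: for every multicategory $\cX$ and every trivial cofibration $g$, the induced multifunctor $\cX \otimes g$ is again a trivial cofibration. Granting this, the map $\cQ \otimes \cR \to (\cQ \otimes \cR) \sqcup_{\cP \otimes \cR} (\cP \otimes \cS)$ is the cobase change of the trivial cofibration $\cP \otimes g$ along $f \otimes \cR$, hence itself a trivial cofibration by the model structure axioms, and its composite with $f \square g$ equals $\cQ \otimes g$, another trivial cofibration by the key lemma. Two-out-of-three then identifies $f \square g$ as a weak equivalence.

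The technical heart is the key lemma, which I would establish using the characterization of weak equivalences as fully faithful and essentially surjective multifunctors. Essential surjectivity of $[\cX \otimes g]_1$ follows immediately from essential surjectivity of $[g]_1$: any object $(x, s)$ in $\cX \otimes \cS$ is isomorphic to $(x, g(r))$ whenever $s \cong g(r)$ in $[\cS]_1$, since such an isomorphism lifts coordinate-wise. Full faithfulness on every arity is the most delicate step and would be proved using the explicit presentation of operations in $\cX \otimes \cS$ in terms of generating operations from $\cX$ and $\cS$ subject to the interchange relation: because $g$ is fully faithful and the interchange relation treats the two factors symmetrically, the bijection on operation sets induced by $g$ promotes to the desired bijection for $\cX \otimes g$.
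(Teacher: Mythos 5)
The paper states this proposition with no proof at all, so your proposal can only be judged on its own terms. Your overall architecture is sound and is the natural one: the unit axiom is indeed vacuous because every object of $\multi$ is cofibrant; the cofibration half of the pushout-product axiom does follow from the facts that $\obj(\cP\otimes\cQ)=\obj(\cP)\times\obj(\cQ)$, that $\obj\colon\multi\to\Sets$ preserves pushouts (it has a right adjoint, the indiscrete multicategory), and that the pushout-product of injections of sets is an injection; and the reduction of the acyclic half, via cobase change and two-out-of-three, to the key lemma that $\cX\otimes(-)$ preserves trivial cofibrations is the standard argument and is correct.

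The genuine gap is in your proof of the key lemma, exactly at the step you flag as delicate. The sentence ``because $g$ is fully faithful and the interchange relation treats the two factors symmetrically, the bijection on operation sets induced by $g$ promotes to the desired bijection for $\cX\otimes g$'' is not an argument, and as stated it would prove far too much: it would show that $\cX\otimes(-)$ preserves \emph{all} fully faithful multifunctors. Full faithfulness of $g$ cannot deliver this, because an operation of $\cX\otimes\cS$ whose inputs and output lie in the image of $\cX\otimes g$ is an equivalence class of composites of generators $\phi\otimes s$ and $x\otimes\psi$ that may pass through objects of $\cS$ \emph{outside} the image of $g$, and the interchange relations identifying two such composites may likewise involve non-image objects; $g$ being arity-wise bijective says nothing about either. (Note also that nullary operations defeat any coordinate-wise analysis even in arity one: a binary generator composed with a nullary one is unary, so $[\cX\otimes\cS]_1$ is not generated by unary generators alone.) What actually rescues the lemma is the essential surjectivity of $g$, which you use only at the level of objects: every generator must be conjugated, through the chosen isomorphisms, back into the image before operation sets can be compared, and that normal-form analysis is the real content. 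A cleaner route avoids it for general $g$ entirely: since the Boardman--Vogt tensor product is closed, $\cX\otimes(-)$ is a left adjoint and preserves pushouts, transfinite compositions and retracts; the trivial cofibrations of $\multi$ are the saturation of the single map $\Xi(\cI)\to\Xi(\cG)$, where $\cG$ is the groupoid with two objects and a unique isomorphism between them, because fibrations are precisely the maps with the right lifting property against it; so it suffices to check that $\cX\cong\cX\otimes\Xi(\cI)\to\cX\otimes\Xi(\cG)$ is a trivial cofibration. This last is an explicit computation: $\cX\otimes\Xi(\cG)$ is two copies of $\cX$ glued along a natural isomorphism, and since $\Xi(\cG)$ has only unary operations no Eckmann--Hilton interchange collapse can occur; this is the set-level analogue of the fully-faithful-pushout claim proved later in the paper for the enriched case. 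Without this (or an equivalent) argument, your proof is incomplete at its technical heart.
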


\section{The Homotopy Theory of $\C$-Multicategories}The enrichment of $\multi$ over $\cCat$ extends to the enriched case with only minor modifications. Let $\C$ be the category of simplicial sets with the standard model structure. Given a simplicial category $\cA$, we can form a genuine category $\pi_0(\cA)$ which has the same set of objects as $\cA$ and whose set of morphisms $\pi_{0}(\cA)(x,y):=[\unit, \cA(x,y)]$. This induces a functor $\pi_{0}(-):\cCat(\C)\rightarrow \cCat,$ with values in the category of small categories and, moreover, a functor $\Ho(\cCat(\C))\longrightarrow\Ho(\cCat).$ In other words, any $F:\cC\longrightarrow\cD$ in $\Ho(\cCat(\C))$ induces a morphism $\pi_{0}(\C)\longrightarrow\pi_{0}(\D)$ which is well defined up to a non-unique isomorphism. This lack of uniqueness will not be an issue for the purposes of this paper since we study properties of functors which are invariant up to isomorphism. The \emph{essential image} of a simplicial functor $F:\cA\rightarrow\mathcal{B}$ is the full simplicial subcategory of $\mathcal{B}$ consisting of all objects whose image in the component category $\pi_{0}(\mathcal{B})$ are in the essential image of the functor $\pi_{0}(F)$. As with the non-enriched case, we can consider the linear part of a $\C$-enriched multicategory $\cP$, $[\cP]_{1}$, which is in this case a simplicial category. Applying the functor $\pi_{0}$ to the simplicial category $[\cP]_{1}$ gives us the \emph{underlying category} of the multicategory $\cP$. In order to cut back on notation, we will just denote this category by $[\cP]_{1}$ rather than $\pi_{0}([\cP]_{1})$. 
   
\subsection{The Berger-Moerdijk Model Structure on $S$-colored Operads}As we mentioned in the first section, an \emph{$S$-colored operad}, or \emph{multicategory enriched in $\C$ with object set $S$}, is a monoid in $\coll(\C)_{S}$ with respect to the composition product(see~\cite[Appendix]{BM07}). As long as our enriching category,~$\C$ satisfies a set of technical conditions (described below), the model category structure on $\coll(\C)^{*}_{S}$ can be lifted along a free-forgetful adjunction to a model structure on $\multi(\C)_{S}$. Intuitively speaking, an $n$-operation of the free multicategory with object set $S$ generated by the pointed collection $K\in\coll(\C)^{*}_{S}$ is a tree with inputs labeled by $1,\dots,n$, edges labeled by objects of $K$, and vertices labeled by elements of $K$. We will not include the explicit construction here, but one can find this construction in~\cite{BM06}, or~\cite{BV}.

\begin{theorem}\cite{BM07}\label{forget}The forgetful functor from $\multi(\C)_{S}$ to \emph{pointed} collections has a left adjoint$$F^{\star}:\coll(\C)^{*}_{S}\rightarrow\multi(\C)_{S}.$$\end{theorem}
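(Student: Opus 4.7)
The plan is to produce $F^\star(K)$ by the standard free-operad-on-a-pointed-collection construction indexed by rooted $S$-colored trees. First I would declare that an $n$-ary operation in $F^\star(K)(s_1,\ldots,s_n;s)$ is represented by a rooted tree $T$ whose leaves are labeled bijectively by $\{1,\ldots,n\}$, whose edges are colored by elements of $S$ (with the $i$-th leaf colored $s_i$ and the root colored $s$), and each internal vertex $v$ with input edge colors $s_1(v),\ldots,s_{k(v)}(v)$ and output color $t(v)$ is decorated by an element of $K(s_1(v),\ldots,s_{k(v)}(v);t(v))$. The $\C$-object $F^\star(K)(s_1,\ldots,s_n;s)$ is then the coproduct over isomorphism classes of such $S$-colored, leaf-labeled trees of the tensor products $\bigotimes_{v\in T} K(s_1(v),\ldots,s_{k(v)}(v);t(v))$, quotiented by the relations generated by the pointings: whenever a univalent internal vertex of color $x$ is decorated by the image of $1_x:\unit\to K(x;x)$, it is identified with the edge obtained by erasing that vertex. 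Equivalently, $F^\star(K)$ is a colimit of iterated composition-product powers of $K$ taken over a category of $S$-trees, and this colimit exists because $\C$ is cocomplete and $\otimes$ distributes over colimits.

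Second, I would equip $F^\star(K)$ with its multicategory structure directly from tree combinatorics. The composition operation $\circ_i$ is \emph{grafting}: attach the root edge of the second tree to the $i$-th leaf of the first, which is legal precisely because the color labels match. The symmetric action $\sigma^*$ on $F^\star(K)(s_1,\ldots,s_n;s)$ is the relabeling of leaves by $\sigma\in\Sigma_n$. The unit $1_x\in F^\star(K)(x;x)$ is the class of the trivial tree, an edge of color $x$ with no internal vertex. Strict associativity of grafting, equivariance under the $\Sigma_n$-actions, and the unit axioms are all tautological on trees once the unit-contraction relations are imposed; this is exactly the role played by the requirement that $K$ be \emph{pointed}.

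Third, I would verify the universal property. Let $U:\multi(\C)_{S}\to\coll(\C)^*_{S}$ be the forgetful functor, which remembers the underlying pointed collection. Given a morphism $f:K\to U\cP$ of pointed collections, define $\tilde f:F^\star(K)\to\cP$ by sending a decorated tree $T$ to the iterated $\circ_i$-composition in $\cP$ of the images under $f$ of the vertex decorations of $T$. This is well defined because $f$ preserves pointings (so the contraction relations are respected) and because the associativity and equivariance axioms in $\cP$ make the iterated composition independent of the order in which one performs the graftings. Uniqueness is immediate since $\tilde f$ is forced on the single-vertex trees, which generate $F^\star(K)$ as a multicategory.

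The main obstacle is bookkeeping rather than conceptual: one must check that the tree colimit really lands in $\coll(\C)_{S}$, that grafting descends to the quotient by $\Sigma$-coinvariants at each vertex and by unit contraction, and that composition is strictly (not merely coherently) associative. All of these reduce to cocompleteness of $\C$ together with the distributivity of $\otimes$ over colimits, which are among the standing assumptions on $\C$ recalled in Section 2; granted these, the construction is the evident ``free monoid on a pointed collection'' in the right-closed monoidal category $(\coll(\C)_{S},\circ)$.
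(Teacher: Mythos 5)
Your construction is correct and is essentially the same approach the paper takes: the paper does not prove this theorem itself but cites Berger--Moerdijk and sketches exactly your construction, describing operations of the free multicategory as trees with inputs labeled by $1,\dots,n$, edges labeled by objects, and vertices decorated by elements of $K$, with the full details deferred to \cite{BM06} and \cite{BV}. Your tree-grafting description, the unit-contraction relations encoding the pointing, and the colimit formulation handling the fact that $\C$-objects have no elements all match the cited construction.
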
The category of symmetric sequences, the category of $\C$-collections with fixed set of objects $S$, $\coll(\C)_{S}$, and the category of pointed $\C$-collections with fixed set of objects $S$ are all $\C$-model categories. This follows from the standard argument that given a cofibrantly generated monoidal model category $\C$ with cofibrant unit and given any finite group $G$, there is an induced monoidal model category structure on the category of objects with right $G$-action, $\C^G$ where the forgetful functor $\C^G \longrightarrow\C$ preserves and reflects weak equivalences and fibrations~\cite[Hovey]{Hov99}. In particular, a morphism of collections $K\longrightarrow L$ is a weak equivalence (respectively, fibration) if for each $n\ge0$ and each sequence of objects $x_1,\ldots, x_n;x$ in $S$ the morphism$$K(x_1,\ldots, x_n;x)\longrightarrow L(x_1,\ldots, x_n;x)$$is a weak equivalence (respectively, fibration) in $\C$.\\

A \emph{symmetric monoidal fibrant replacement functor} is a fibrant replacement functor which is symmetric monoidal and for every $X$ and $Y$ in $\C$ the following diagram commutes\begin{equation*}\xymatrix{X\otimes Y\ar[r]^{r_{X\otimes Y}}\ar[d]_{r_X\otimes r_Y} & F(X\otimes Y) \\
FX\otimes F(Y),\ar[ur] &}\end{equation*}where $r\colon\id_{\C}\longrightarrow F$ is the natural transformation coming from fibrant replacement.\\

\begin{theorem}~\cite{BM07}Let $\C$ be a cofibrantly generated monoidal model category with cofibrant unit and a symmetric monoidal fibrant replacement functor. Let $S$ be a fixed set of objects (or colors). If $\C$ has a co-algebra interval, then the category of all non-symmetric $\C$-multicategories with $S$-objects (equivalently, the category of non-symmetric $S$-colored operads) admits a cofibrantly generated model category structure where a morphism $F:\cQ\longrightarrow\cP$ is a weak equivalence (respectively, fibration) if and only if, for each $n\ge0$, and each sequence $x_1,\dots,x_n,x$ of objects in $S$, the map $F:\cQ(x_1,\dots,x_n;x)\longrightarrow\cP(Fx_1,\dots,Fx_n;Fx)$ is a weak equivalence (respectively, fibration) of $\C$-objects. If the interval is moreover cocommutative, the same is true for the category of symmetric $\C$-multicategories with $S$-objects (equivalently, the category of symmetric $S$-colored operads).\end{theorem}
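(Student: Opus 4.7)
The plan is to apply Kan's transfer principle to lift the cofibrantly generated model structure on $\coll(\C)^{*}_{S}$ along the free-forgetful adjunction $F^{\star} \dashv U : \multi(\C)_{S} \rightleftarrows \coll(\C)^{*}_{S}$ of Theorem~\ref{forget}. Let $I_{\coll}$ and $J_{\coll}$ denote the generating cofibrations and generating trivial cofibrations of pointed $S$-collections, obtained by applying the standard argument of Hovey to equip each $\coll(\C)^{*}_{S}(x_{1},\ldots,x_{n};x)$-slot with a $\Sigma_{n}$-equivariant model structure coming from $\C$. I would take $F^{\star}(I_{\coll})$ and $F^{\star}(J_{\coll})$ as the candidate generating sets in $\multi(\C)_{S}$, and declare the weak equivalences and fibrations to be those maps that become such on underlying pointed collections.

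Kan's lemma then requires two things: first, that the domains of $F^{\star}(I_{\coll})$ and $F^{\star}(J_{\coll})$ are small with respect to the relevant transfinite compositions of pushouts in $\multi(\C)_{S}$; second, and this is the real content, that every relative $F^{\star}(J_{\coll})$-cell complex is sent by $U$ to a weak equivalence in $\coll(\C)^{*}_{S}$. The smallness holds because $F^{\star}$ preserves filtered colimits in a controlled way (they are computed in collections) and the domains were small in $\coll(\C)^{*}_{S}$. Closure of weak equivalences and fibrations under retracts, two-out-of-three, and the characterization of fibrations via the right lifting property against $F^{\star}(J_{\coll})$ are formal once transfer is established.

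The main obstacle, and the place where all the hypotheses on $\C$ enter, is analyzing a single pushout in $\multi(\C)_{S}$ of the form
\begin{equation*}
\xymatrix{
F^{\star}(K) \ar[r] \ar[d] & \cP \ar[d] \\
F^{\star}(L) \ar[r] & \cP'
}
\end{equation*}
where $K \to L$ is a trivial cofibration of pointed $S$-collections, and showing that $\cP \to \cP'$ is a levelwise weak equivalence. I would resolve this by writing $\cP'$ as a sequential colimit over a filtration indexed by trees, where each stage is built from $\cP$ by attaching cells indexed by trees with a prescribed number of ``new'' vertices labeled by $L$ (and the others labeled by $\cP$). At each filtration step the attaching map is, up to $\Sigma_{n}$-coinvariants, a pushout-product-type construction involving $K \to L$ and the operations of $\cP$. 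The co-algebra interval furnishes a cylinder object in $\C$ that interacts correctly with the monoidal structure, making each such pushout-product a trivial cofibration; the cofibrant unit ensures that the base cases behave; and cocommutativity of the interval is precisely what is needed for the $\Sigma_{n}$-coinvariants to remain weak equivalences, which is where the symmetric case parts company with the non-symmetric one. Once this single-pushout analysis is in hand, transfinite composition gives the result for arbitrary relative $F^{\star}(J_{\coll})$-cell complexes, and Kan's theorem concludes.
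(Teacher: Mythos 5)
You should first note that the paper itself does not prove this theorem at all: it is quoted verbatim from Berger--Moerdijk \cite{BM07}, so your proposal has to be measured against their argument. Your setup is the right one and matches theirs -- transfer along the free--forgetful adjunction $F^{\star}\dashv U$ between $\coll(\C)^{*}_{S}$ and $\multi(\C)_{S}$, with smallness handled by the fact that filtered colimits of operads are computed on underlying collections. But the heart of your sketch, the single-pushout analysis, is where the argument breaks. In your tree-indexed filtration, the attaching maps tensor the trivial cofibration $K\to L$ against components $\cP(x_1,\dots,x_n;x)$, which are \emph{arbitrary, non-cofibrant} objects of $\C$, and then pass to $\Sigma_n$-coinvariants. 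The pushout-product axiom says nothing about such maps; to conclude that they are weak equivalences you would need at least the monoid axiom, plus control of the coinvariants, and neither is among the hypotheses. Moreover, the roles you assign to the hypotheses are not roles they can actually play: a coalgebra interval is a cylinder for the unit, but that does not make your filtration maps trivial cofibrations; and cocommutativity of the comultiplication has nothing to do with coinvariants preserving weak equivalences (that would require freeness or projective cofibrancy of the $\Sigma_n$-actions, which is exactly what fails for a general operad $\cP$).

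The telltale symptom is that your proof never uses the symmetric monoidal fibrant replacement functor, a stated hypothesis that is essential -- and, by Lewis's obstruction \cite{Lew91} (cited in the paper), far from innocuous. Berger--Moerdijk avoid any pushout analysis entirely: they verify the transfer conditions by Quillen's \emph{path object argument}. The symmetric monoidal fibrant replacement functor, applied levelwise, gives a fibrant replacement functor on $S$-colored operads (monoidality is what makes the replacement again an operad); the coalgebra interval $H$ gives a functorial path object for a fibrant operad $\cP$, defined levelwise by $\cP(x_1,\dots,x_n;x)^{H}$, whose operad structure is the convolution structure built from the counit and the coassociative comultiplication $H\to H\otimes H$; the cofibrant unit makes $H$ cofibrant, so that this is genuinely a path object. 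Cocommutativity enters only to make the convolution structure $\Sigma_n$-equivariant, i.e.\ to make the path object a \emph{symmetric} operad -- that, and not any coinvariants computation, is where the symmetric and non-symmetric cases diverge. With fibrant replacements and path objects in hand, Kan's transfer lemma in the form used by Berger--Moerdijk yields the model structure without ever filtering a pushout. If you wanted to salvage your route, you would need genuinely different and stronger hypotheses (e.g.\ the monoid axiom together with $\Sigma$-cofibrancy assumptions), so as written the proposal has a real gap.
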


\begin{example}The category of simplicial sets is a Cartesian closed, cofibrantly generated, monoidal model category that admits a co-associative, co-commutative interval. As symmetric monoidal fibrant replacement functor, we can choose either the $Ex^\infty$functor or the singular chain complex of the geometric realization functor, since both are product-preserving. Therefore, for a fixed set $S$, the category of $S$-colored operads form a model category.\end{example}

\section{The Bergner Model Structure on Simplicial Categories}The following theorem is due to Bergner~(\cite{Bergner}).\begin{theorem}The category of all small simplicial categories, $\cCat(\mathcal{C}),$ supports a right proper, cofibrantly generated, model category structure. The weak equivalences (respectively, fibrations) are the $\C$-enriched functors$$F:\mathcal{A}\longrightarrow\mathcal{B}$$ such that:
\begin{description}
  \item[W1] for all objects $x,y$ in $\mathcal{A}$, the $\C$-morphism $F_{x,y}:\mathcal{A}(x,y)\longrightarrow\mathcal{B}(Fx,Fy)$ is a weak equivalence (respectively, fibration) in the model structure on $\C$ and
  \item[W2]the induced functor $\pi_0(F):\pi_{0}(\mathcal{A})\longrightarrow \pi_{0}(\mathcal{B})$ is a weak equivalence (respectively, fibration) of categories.\end{description}\end{theorem}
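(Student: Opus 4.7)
The strategy is to apply a cofibrantly generated recognition theorem in the spirit of Quillen--Kan to the classes determined by W1 and W2: one defines cofibrations via the left lifting property against the trivial fibrations, produces explicit sets $\I$ and $\J$ of generating cofibrations and generating trivial cofibrations, checks the smallness hypothesis, and then verifies that relative $\J$-cell complexes lie in the intersection of W1 and W2.

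For generators I would take simplicial analogues of the unenriched generators already described. For any simplicial set $K$ let $U[K]$ denote the simplicial category with two objects $\{0,1\}$, non-trivial hom-space $U[K](0,1)=K$, and no non-identity composable pairs. Set
\begin{equation*}
\I=\{\emptyset\hookrightarrow\{*\}\}\;\cup\;\{U[\partial\Delta^n]\to U[\Delta^n]\mid n\ge 0\},
\end{equation*}
\begin{equation*}
\J=\{U[\Lambda^n_k]\to U[\Delta^n]\mid n\ge 1,\;0\le k\le n\}\;\cup\;\{\{*\}\hookrightarrow\cK\},
\end{equation*}
where $\cK$ denotes the ``walking homotopy equivalence'' simplicial category freely generated by a morphism $f:0\to 1$ together with a chosen homotopy inverse and contracting homotopies on each side. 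The first family of $\J$ accounts for W1 by lifting anodyne extensions in $\C$ into a single hom-space, and the map $\{*\}\hookrightarrow\cK$ accounts for W2 by ensuring that deficiencies in essential surjectivity on $\pi_0$ can be corrected.

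The bookkeeping axioms are routine: two-out-of-three and retract closure for weak equivalences follow from the corresponding statements for $\C$ and for the unenriched category $\cCat$, cocompleteness of $\cCat(\C)$ is standard, and the sources of the morphisms in $\I$ and $\J$ are small. From these one deduces M4 and M5 by Quillen's small object argument once one checks that (i) an $\I$-injective morphism $F$ is a trivial fibration, which holds because surjectivity of $\obj(F)$ combined with hom-wise acyclic fibrations of simplicial sets forces $\pi_0(F)$ to be an equivalence of unenriched categories; and (ii) every relative $\J$-cell complex is both a cofibration and a weak equivalence.

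The technical heart is clause (ii). Pushouts of the first type in $\J$ only modify a single hom-space, and the $\C$-model category structure on simplicial collections recalled in the previous section makes the resulting map a W1-equivalence without changing objects or $\pi_0$. The hard part is pushouts of $\{*\}\hookrightarrow\cK$, which simultaneously adjoin a new object and force a chosen isomorphism between it and a prescribed existing object; one must show that the resulting simplicial functor is a Dwyer--Kan equivalence and that this property is stable under subsequent $\J$-cell attachments. I would handle this by verifying that $\cK$ is weakly contractible in $\cCat(\C)$ so that $\{*\}\hookrightarrow\cK$ is itself a weak equivalence, then using a transfinite filtration of cell attachments and a careful analysis of the free simplicial categories generated at each stage to reduce the question to checking that W2 is preserved when a single homotopy equivalence is freely added to an otherwise fibrant object. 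This pushout-stability under free adjunction of equivalences is the main obstacle, and it is precisely where the combinatorics of path components in free simplicial categories must enter the argument.
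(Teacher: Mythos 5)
First, a remark on the target of the comparison: the paper does not actually prove this theorem. It is stated as Bergner's result, cited to \cite{Bergner}, and the paper merely records her generating cofibrations and generating trivial cofibrations in the two propositions that follow. So your proposal must be measured against Bergner's argument as the paper records it, and there it diverges at the decisive point.

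The genuine gap is your second family of generating trivial cofibrations, the single map $\{*\}\hookrightarrow\cK$ with $\cK$ the simplicial category \emph{freely} generated by $f\colon 0\to 1$, a homotopy inverse $g$, and homotopies $H\colon gf\simeq \id_0$ and $K\colon fg\simeq \id_1$. This free ``walking homotopy equivalence'' is \emph{not} weakly contractible: the two homotopies $fH$ and $Kf$ from $fgf$ to $f$ are not identified by any cell of the free category, and they glue to an essential loop in the function complexes (equivalently, the space of equivalence data $(g,H,K)$ extending a fixed equivalence has the homotopy type of a loop space, not a point; killing this loop is exactly what a triangle identity, or a coherent tower of higher homotopies, would do). Consequently $\{*\}\hookrightarrow\cK$ itself fails condition W1, so it is not a weak equivalence, and the recognition-theorem hypothesis that relative $\J$-cell complexes be weak equivalences is irreparably false --- already the pushout of this map along the identity of $\{*\}$ is $\cK$. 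The step you flag as ``verifying that $\cK$ is weakly contractible in $\cCat(\C)$'' is precisely the step that cannot be carried out. Bergner avoids this by not using any single finite interval: her second family (recorded in the paper's proposition characterizing fibrations) consists of \emph{all} inclusions $I\hookrightarrow\cH$, where $\cH$ runs over a set of representatives of isomorphism classes of two-object simplicial categories whose function complexes are weakly contractible with countably many simplices, and which are cofibrant in the Dwyer--Kan fixed-object model structure on $\cCat(\sSets)_{\{x,y\}}$. The DK-cofibrancy is what makes pushouts along these maps weak equivalences (your clause (ii)), and the countability bound is what allows a mere \emph{set} of such maps to detect fibrations via a cardinality/factorization argument. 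Your first family of generators and your set $\I$ do agree with Bergner's, and your outline of the $\I$-injective characterization is fine; but the replacement of her set of intervals $\cH$ by one free finite interval $\cK$ is not a harmless simplification --- it is where the proof breaks.
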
 

Bergner also gives an explicit description of the generating (acyclic) cofibrations of this model structure, which is worth describing here. We let $\emptyset$ denote the empty category and $I=\{*\}$ for the category which has one object and one identity arrow (viewed as a simplicial category by applying the strong monoidal functor $\Sets\longrightarrow\sSets$). For any simplicial set $K$ we define a simplicial category $G_{1}[K]$ which has two objects, arbitrarily called $0$ and $1$, and $\Hom(0,1)=K$ as the only non-zero function complex. 

Note that if $F:\cA\rightarrow\cB$ satisfies condition $W1$, then checking that $F$ satisfies condition $W2$ is equivalent to checking that the induced functor$$\pi_0(F):\pi_0(\cA)\longrightarrow\pi_0(\mathcal{B})$$is essentially surjective.

\begin{prop}\cite{Bergner}\label{generatingcofibrationsbergner}A functor of simplicial categories $F\cA\rightarrow\cB$ is an acyclic fibration if, and only if, $F$ has the right lifting property (RLP) with respect to all the maps\begin{itemize}
\item $G_1[K]\hookrightarrow G_1[L]$ where $K\hookrightarrow L$ is a generating cofibration of $\sSets$ and
\item the maps $\emptyset\hookrightarrow I$.\end{itemize}\end{prop}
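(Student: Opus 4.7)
The plan is to prove both directions by reducing the enriched lifting problems to lifting problems in $\sSets$ and to object-level data, using the adjointness that presents a simplicial functor $G_1[K]\to\cA$ as a choice of ordered pair of objects $x,y\in\cA$ together with a simplicial map $K\to\cA(x,y)$. Indeed, $G_1[K]$ has only $K$ as nontrivial mapping complex and the other hom-complexes are either empty or forced identities, so there is no further data and no further compatibility required beyond the choice of $(x,y)$ and a map $K\to\cA(x,y)$.

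For the forward direction, assume $F$ is an acyclic fibration. By condition $W1$, each mapping-space map $F_{x,y}:\cA(x,y)\to\cB(Fx,Fy)$ is a trivial fibration of simplicial sets. A lifting square $G_1[K]\hookrightarrow G_1[L]$ against $F$ corresponds under the adjunction above to a lifting square of $K\hookrightarrow L$ against some $F_{x,y}$, and since $K\hookrightarrow L$ is a generating cofibration of $\sSets$, such a lift exists. For the RLP against $\emptyset\hookrightarrow I$, one must produce an object of $\cA$ lying above any prescribed object of $\cB$; this is surjectivity of $F$ on objects, which follows from $W2$ since $\pi_0(F)$ is an acyclic fibration of ordinary categories and hence surjective on objects (and $\cA$, $\pi_0(\cA)$ share the same object set).

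For the reverse direction, assume $F$ has the stated RLP. Reversing the above adjunction, the RLP against every $G_1[K]\hookrightarrow G_1[L]$ with $K\hookrightarrow L$ a generating cofibration of $\sSets$ is equivalent to every $F_{x,y}$ having the RLP against each such $K\hookrightarrow L$, which is precisely the standard characterization of trivial fibrations of simplicial sets. This yields $W1$. The RLP against $\emptyset\hookrightarrow I$ yields surjectivity of $F$ on objects, hence essential surjectivity of $\pi_0(F)$. By the remark immediately preceding the proposition, $W1$ together with essential surjectivity of $\pi_0(F)$ is equivalent to $W2$, so $F$ is an acyclic fibration.

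The main obstacle is not deep: it is the bookkeeping to translate lifting squares of simplicial functors $G_1[K]\hookrightarrow G_1[L]$ against $F$ into lifting squares of simplicial sets $K\hookrightarrow L$ against the specific mapping-space maps $F_{x,y}$. Once this adjunction is clear, both directions reduce to familiar facts: trivial fibrations of simplicial sets are characterized by RLP against the generating cofibrations of $\sSets$, and condition $W2$ under assumption $W1$ reduces, per the note preceding the proposition, to essential surjectivity of $\pi_0(F)$.
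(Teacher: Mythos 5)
Your overall strategy---translating lifting problems against $G_1[K]\hookrightarrow G_1[L]$ into lifting problems of $K\hookrightarrow L$ against the mapping-space maps $F_{x,y}$, and lifting problems against $\emptyset\hookrightarrow I$ into surjectivity of $F$ on objects---is the standard one; note that the paper itself gives no proof of this proposition (it cites Bergner), so the comparison is against that standard argument. Your forward direction is essentially complete, with one imprecision: surjectivity on objects does not follow from $W2$ alone, since an equivalence of categories need not be surjective on objects; it requires the fibration half of the hypothesis as well, i.e., that $\pi_0(F)$ is an acyclic \emph{fibration} of categories, which is in fact how you phrase it, so the reasoning is right even though the attribution to $W2$ is not.

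The genuine gap is in the reverse direction: you conclude that ``$F$ is an acyclic fibration'' after verifying only $W1$ and $W2$, i.e., only that $F$ is a \emph{weak equivalence}. An acyclic fibration in the Bergner structure must also be a fibration, and fibrations have two conditions: each $F_{x,y}$ is a fibration of simplicial sets (this you get for free, since you have shown each $F_{x,y}$ is a trivial fibration), and---the condition you never address---$\pi_0(F)$ is a fibration of categories, i.e., every isomorphism $e\colon Fx\to y$ in $\pi_0(\cB)$ lifts to an isomorphism in $\pi_0(\cA)$ starting at $x$. This does not follow formally from being a weak equivalence; it needs the following argument. Given $x$ and an isomorphism $e\colon Fx\to y$ in $\pi_0(\cB)$, use surjectivity on objects to choose $x'$ with $Fx'=y$; since $F_{x,x'}$ is a trivial fibration of simplicial sets, the map $\pi_0(\cA(x,x'))\to\pi_0(\cB(Fx,y))$ is a bijection, so $e$ lifts to some $d\colon x\to x'$ in $\pi_0(\cA)$; and since $\pi_0(F)$ is fully faithful (again by $W1$), it reflects isomorphisms, so $d$ is an isomorphism with $\pi_0(F)(d)=e$. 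With this paragraph added your proof is complete; without it, you have only shown that $F$ is a weak equivalence whose mapping-space maps are trivial fibrations and which is surjective on objects, not that it is an acyclic fibration of the model structure.
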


\begin{prop}\cite{Bergner}\label{generatingtrivialcofibrationsbergner}A functor of simplicial categories $F\cA\rightarrow\cB$ is a fibration if, and only if, $F$ has the right lifting property (RLP) with respect to all the maps\begin{itemize}
\item $G_1[K]\hookrightarrow G_1[L]$ where $K\hookrightarrow L$ is a generating cofibration of $\sSets$ and
\item the maps $I\hookrightarrow\mathcal{H}$ where $\obj(\mathcal{H})$ is a set of representatives for the isomorphism classes of simplicial categories on two objects. Each function complex of $\mathcal{H}$ is weakly contractable and has countably many simplices. Furthermore, we require that $\mathcal{H}$ be a cofibrant object in the Dwyer-Kan model category structure on $\cCat(\sSets)_{\{x,y\}}$~\cite{DK1}.\end{itemize}\end{prop}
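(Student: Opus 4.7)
The plan is to verify both directions by splitting the two families of generating maps according to which clause of the Bergner fibration definition they capture. The maps $G_1[K]\hookrightarrow G_1[L]$ will test condition $W1$ (that each $F_{x,y}$ is a Kan fibration), while the maps $I\hookrightarrow\mathcal{H}$ will test the isofibration condition on $\pi_0$ that, together with $W1$, yields condition $W2$.

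For the first family, I would exploit the evident adjunction between the functor $G_1[-]:\sSets\rightarrow\cCat(\sSets)$ and the two-pointed hom-complex functor $\cA\mapsto\cA(x,y)$: a lifting problem for $F:\cA\rightarrow\cB$ against $G_1[K]\hookrightarrow G_1[L]$ is the same datum as a choice of objects $x,y$ in $\cA$ together with a lifting problem in $\sSets$ for $F_{x,y}:\cA(x,y)\rightarrow\cB(Fx,Fy)$ against $K\hookrightarrow L$. Hence $F$ has the RLP against all such maps (as $K\hookrightarrow L$ ranges over the generating acyclic cofibrations of $\sSets$) if and only if each $F_{x,y}$ is a Kan fibration, which is exactly $W1$.

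For the second family, I would analyze what data a map out of $\mathcal{H}$ records. Because $\obj(\mathcal{H})=\{x,y\}$ and every function complex of $\mathcal{H}$ is weakly contractible, $\pi_0(\mathcal{H})$ is the walking isomorphism category, and the cofibrancy of $\mathcal{H}$ in the Dwyer--Kan structure on $\cCat(\sSets)_{\{x,y\}}$ is precisely what allows one to honestly realize coherent-isomorphism data at the simplicial level. A map $I\rightarrow\cB$ picks out an object of $\cB$, and extending it to $\mathcal{H}\rightarrow\cB$ amounts to choosing a second object together with a (coherent) isomorphism between them in $\pi_0(\cB)$. Consequently, a lifting problem of $F$ against $I\hookrightarrow\mathcal{H}$ translates exactly into the following: given $x\in\cA$ and an isomorphism $\phi:Fx\rightarrow y$ in $\pi_0(\cB)$, produce $x'\in\cA$ and an isomorphism $\psi:x\rightarrow x'$ in $\pi_0(\cA)$ with $\pi_0(F)(\psi)=\phi$. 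Solvability of this problem for all such data is the statement that $\pi_0(F)$ is an isofibration, which together with $W1$ is the content of the Bergner fibration condition $W2$.

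The main obstacle will be Step~2, and more precisely the careful unpacking of maps out of $\mathcal{H}$: one must verify that the cofibrancy of $\mathcal{H}$ in $\cCat(\sSets)_{\{x,y\}}$ together with the contractibility of its hom-complexes really does allow every homotopy-level isomorphism in $\cA$ or $\cB$ to be rectified to a strict simplicial map $\mathcal{H}\rightarrow(-)$, and symmetrically that every such strict map presents an isomorphism in $\pi_0$. Once this translation is made rigorous, the conjunction of Step~1 and Step~2 gives that the RLP against the union of the two generating families is equivalent to satisfying both $W1$ and $W2$, completing the characterization.
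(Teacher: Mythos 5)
The first thing to say is that the paper itself contains no proof of this proposition: it is imported verbatim from Bergner \cite{Bergner}, so your sketch can only be measured against Bergner's own argument, whose overall architecture you do reproduce. Your Step 1 is correct and complete: by the adjunction between $G_1[-]$ and the two-pointed hom-complex functor, lifting problems against $G_1[K]\hookrightarrow G_1[L]$ are exactly lifting problems in $\sSets$ for the maps $F_{x,y}$, so the first family detects the local fibration condition. You also silently repair a typo in the statement as printed: for fibrations, $K\hookrightarrow L$ must range over the generating \emph{acyclic} cofibrations of $\sSets$ (horn inclusions); as literally written, the first family would instead force each $F_{x,y}$ to be an acyclic fibration.

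The genuine gap is in Step 2. Your assertion that a lifting problem against $I\hookrightarrow\mathcal{H}$ ``translates exactly'' into the isomorphism-lifting problem for $\pi_0(F)$ is not a translation but a restatement of what must be proved, and it hides two separate nontrivial arguments, one per direction. (i) For (RLP $\Rightarrow$ isofibration on $\pi_0$) you must show that \emph{every} homotopy equivalence $\phi\colon Fx\to y$ in an arbitrary simplicial category $\cB$ is the image of the generating equivalence under some simplicial functor $\mathcal{H}\to\cB$ with $\mathcal{H}$ drawn from the fixed \emph{set}; this is the technical heart of Bergner's proof, and it is precisely where the countability bound and the cofibrancy hypothesis enter (one extracts a countable simplicial subcategory of $\cB$ containing $\phi$ together with all data exhibiting it as a homotopy equivalence, passes to a countable cofibrant resolution in $\cCat(\sSets)_{\{x,y\}}$, and verifies weak contractibility of the hom-complexes). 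Without this lemma the set of maps $I\hookrightarrow\mathcal{H}$ might simply fail to see some homotopy equivalences of $\cB$, and the implication collapses. (ii) For (fibration $\Rightarrow$ RLP), solving the isofibration problem only produces an object $x'$ and an equivalence $x\to x'$ lifting the generating one, whereas the lift you owe is a full simplicial functor $\mathcal{H}\to\cA$, defined on all four hom-complexes compatibly with composition; producing it is a second argument, using condition W1 together with the cofibrancy of $\mathcal{H}$ — for instance by restricting to the full simplicial subcategories on the two relevant objects, observing that W1 makes the restricted map a fibration in the Dwyer-Kan structure on $\cCat(\sSets)_{\{x,y\}}$, and then lifting the cofibrant, locally weakly contractible $\mathcal{H}$ through it. Your closing paragraph correctly names this obstacle, but naming it is where the proof has to begin, not where it can end.
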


\subsection{The Proof of The Main Theorem} The main theorem of this paper is the following. 
\begin{theorem}The category of small $\C$-enriched symmetric multicategories admits a right proper cofibrantly generated model category structure in which a multifunctor $$F:\cP\rightarrow\Q$$ is a weak equivalence if: 

\begin{description}	
	\item[W1] for any $n\ge 0$ and for any signature $x_1,...,x_n;x$ in $\cP$ the map of $\C$-objects $$F:\cP(x_1,...,x_n;x)\longrightarrow\cQ(Fx_1,...,Fx_n;Fx)$$ is a weak equivalence in the model category structure on $\C$. 	
	\item[W2] the induced functor $[F]_{1}$ is a weak equivalence of categories.
\end{description} A simplicial multifunctor $F:\cP\longrightarrow\Q$ is a fibration if:\begin{description}\label{fibrations}
  	\item[F1] for any $n\ge 0$ and for any signature $x_1,...,x_n;x$ in $\cP$ the map of $\C$-objects $$F:\cP(x_1,...,x_n;x)\longrightarrow\cQ(Fx_1,...,Fx_n;Fx)$$ is a fibration in the model category structure on $\C$. 
	\item[F2] the induced functor $[F]_{1}$ is a fibration of categories.
\end{description} The cofibrations (respectively, acyclic cofibrations) are the multifunctors which satisfy the left lifting property (LLP) with respect to the acyclic fibrations (respectively, fibrations).\end{theorem}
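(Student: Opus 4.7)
The plan is to apply Kan's recognition theorem for cofibrantly generated model categories, following the same blueprint as the non-enriched Proposition on generating cofibrations and Bergner's treatment of $\cCat(\sSets)$. First, one records that $\multi(\sSets)$ is bicomplete: limits are computed on objects and operation-wise, and colimits exist by Elmendorf--Mandell (already used in the non-enriched proof). The proposed generating sets are
\begin{equation*}
\I = \{G_n[K]\hookrightarrow G_n[L]\}\cup\{\Xi(\emptyset)\hookrightarrow\Xi(I)\},
\end{equation*}
\begin{equation*}
\J = \{G_n[K]\hookrightarrow G_n[L]\}\cup\{\Xi(I)\hookrightarrow\Xi(\cH)\},
\end{equation*}
where in $\I$ the map $K\hookrightarrow L$ ranges over generating cofibrations of $\sSets$ and in $\J$ over generating acyclic cofibrations, $G_n[K]$ denotes the simplicial multicategory on $n+1$ objects with single non-trivial operation space $K\in\sSets$ (extended by the $\Sigma_n$-action), and $\cH$ is Bergner's cofibrant ``simplicial interval'' on two objects.

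Next, I would identify the $\I$-injectives with the acyclic fibrations and the $\J$-injectives with the fibrations. The RLP against $G_n[\partial\Delta^k]\to G_n[\Delta^k]$ (respectively $G_n[\Lambda^k_i]\to G_n[\Delta^k]$) is, directly from the definition of $G_n[-]$, precisely the statement that each operation-space map $F\colon\cP(x_1,\dots,x_n;x)\to\cQ(Fx_1,\dots,Fx_n;Fx)$ is an acyclic Kan fibration (respectively Kan fibration), which is W1 together with the trivial-fibration form of F1 (respectively F1). Using the adjunction $\Xi\dashv[-]_1$ (together with $\pi_0$), the RLP against $\Xi(\emptyset)\to\Xi(I)$ transposes to surjectivity of $\obj(F)$ and RLP against $\Xi(I)\to\Xi(\cH)$ transposes to the isomorphism-lifting condition F2, exactly as in Bergner. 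Hence $\I\text{-inj}$ and $\J\text{-inj}$ match the classes specified by the theorem.

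The central technical obstacle is showing that relative $\J$-cell complexes are weak equivalences. Pushouts along maps of the form $G_n[K]\to G_n[L]$ with $K\to L$ an acyclic cofibration of $\sSets$ fix the object set, so such pushouts are computed inside $\multi(\sSets)_S$ for some $S$; here the Berger--Moerdijk theorem (using that $\sSets$ is a cofibrantly generated monoidal model category with cocommutative interval) provides exactly the trivial-cofibration pushout statement needed, both for W1 and (since the $1$-part is unaffected on $\pi_0$) for W2. Pushouts along $\Xi(I)\to\Xi(\cH)$ require the more delicate Bergner argument: $\Xi$ is left Quillen to the Bergner structure, so the effect on the underlying simplicial category $[\cP]_1$ is a pushout of the corresponding Bergner generating acyclic cofibration and hence a weak equivalence of simplicial categories; one then checks that higher-arity operation spaces on the newly adjoined objects receive the free symmetric operadic extension by trees whose edges are labelled with the contractible mapping spaces from $\cH$, yielding weak equivalences on operation spaces and thus W1. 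Smallness of the domains of $\I$ and $\J$ is automatic since every $G_n[K]$, $\Xi(\emptyset)$, $\Xi(I)$, and $\Xi(\cH)$ is levelwise small.

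Finally, the two-out-of-three property, closure under retracts, and lifting for (acyclic) cofibrations against (acyclic) fibrations follow formally from the Kan recognition setup once the preceding steps are in place. Right properness reduces, via F1 plus the operation-wise characterization of weak equivalences, to right properness of $\sSets$ for the W1 part and to right properness of the Bergner structure (via $[-]_1$) for the W2 part. The hardest step, by some distance, will be verifying that pushouts along $\Xi(I)\to\Xi(\cH)$ preserve W1 on higher-arity operation spaces, since this requires an explicit analysis of the free symmetric multicategory generated by adjoining a ``weakly invertible'' $1$-operation; this is the technical core of the blending between the Bergner and Berger--Moerdijk arguments.
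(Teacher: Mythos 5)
Your blueprint is the same as the paper's: identical generating sets, the same identification of the $\I$-injectives with acyclic fibrations and the $\J$-injectives with fibrations via the adjunction $\Xi\dashv[-]_1$ together with Bergner's classification, and the same reduction of the whole problem to showing that pushouts of the two kinds of generating acyclic cofibrations are weak equivalences. Your treatment of the arity-$n$ case (pushouts along $G_n[K]\to G_n[L]$) also matches the paper in substance: the paper factors the object map of the attaching functor as an injection followed by a surjection $s$, proves that the pushout is computed in $\multi(\C)_S$ after applying $s_*$ (its Claim 3), and then invokes the Berger--Moerdijk fixed-object-set model structure (its Claim 2); your gloss compresses this but the idea is the same.

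However, there is a genuine gap in your handling of pushouts along $\Xi(\cI)\to\Xi(\cH)$, which you yourself flag as the technical core. You assert that the underlying simplicial category of the multicategory pushout is the Bergner pushout $[\cP]_1\amalg_{\cI}\cH$, justified by ``$\Xi$ is left Quillen.'' This is a non-sequitur: $\Xi$ being a left adjoint says nothing about its \emph{right} adjoint $[-]_1$ preserving pushouts, and in fact it does not preserve this one in general. Concretely, suppose $\cP$ has a nullary operation $c\in\cP(\,;x_*)$ and a binary operation $b\in\cP(x_*,x;y)$, and let $\phi\in\cH(0,1)$, $\phi'\in\cH(1,0)$. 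Then $b\circ_1(\phi'\circ\phi\circ c)$ is a unary operation of the pushout from $x$ to $y$; since $\cH$ is cofibrant (hence free in the Dwyer--Kan sense), $\phi'\circ\phi$ is not an identity, so this element is a reduced tree involving a binary and a nullary vertex and is not equal to any alternating word of unary $\cP$-morphisms and $\cH$-morphisms. Hence the comparison map $([\cP]_1\amalg_{\cI}\cH)(x,y)\to[\cQ]_1(x,y)$ is not even surjective, and both your W1 claim for operation spaces among old objects and your entire W2 argument rest on a false premise. The paper avoids this trap: it factors the pushout through the pullback $J^*(\Xi\cH)$ (the full sub-multicategory of $\Xi\cH$ on the object $0$), handles the first, object-fixing pushout in the Berger--Moerdijk structure on $\multi(\C)_S$, and then proves by an explicit construction of the second pushout (Claim 4) that pushing out along the fully faithful, injective-on-objects map $J^*(\Xi\cH)\to\Xi(\cH)$ yields a \emph{fully faithful} multifunctor --- exactly the tree analysis you deferred; W2 is then deduced not through $[-]_1$ but through the multicategory-level functor $\pi_0\colon\multi(\sSets)\to\multi$, which \emph{is} a left adjoint and so does preserve pushouts, combined with the model structure on non-enriched $\multi$. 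Until you carry out an analogue of that explicit analysis (or otherwise control all operation spaces of the pushout), your verification that relative $\J$-cell complexes are weak equivalences is incomplete at its hardest point.
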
  

We have the following useful characterization of acyclic fibrations.

\begin{lemma} If a multifunctor $F:\cP\longrightarrow\Q$ fixes objects, and satisfies conditions $[F1]$ and $[W1]$, then $F$ is a acyclic fibration.\end{lemma}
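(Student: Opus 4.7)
The plan is to verify that $F$ satisfies the remaining clauses $[W2]$ and $[F2]$, which together with the hypotheses $[W1]$ and $[F1]$ give the claim. Both remaining clauses are statements about the linear part $[F]_{1}:[\cP]_{1}\rightarrow [\cQ]_{1}$ in the ordinary category $\cCat$ (after passing through $\pi_{0}$), so the entire argument reduces to a short analysis of this underlying functor.

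First, since $F$ fixes objects, the induced functor $[F]_{1}$ is the identity on the common object set $\obj(\cP)=\obj(\cQ)$. Next, specialising $[W1]$ to $n=1$, the morphism
\[
F:\cP(x;y)\longrightarrow\cQ(x;y)
\]
is a weak equivalence of simplicial sets for every pair of objects $x,y$. Applying $\pi_{0}$ to such a weak equivalence yields a bijection of hom-sets $\pi_{0}\cP(x;y)\cong\pi_{0}\cQ(x;y)$, and these bijections are evidently compatible with composition. Consequently $[F]_{1}:[\cP]_{1}\to[\cQ]_{1}$ is bijective on objects and fully faithful, i.e.\ an isomorphism of ordinary categories.

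Once $[F]_{1}$ is an isomorphism of categories, $[W2]$ is immediate: an isomorphism of categories is in particular a categorical equivalence. For $[F2]$, recall that by the first theorem of Section~3, a functor in $\cCat$ is a fibration if and only if each isomorphism in the target with lifted source-object can be lifted to an isomorphism in the source. Since $[F]_{1}$ is an isomorphism of categories, given any object $x\in[\cP]_{1}$ and any isomorphism $\phi:[F]_{1}(x)\to q$ in $[\cQ]_{1}$, the preimage $\psi:=[F]_{1}^{-1}(\phi)$ is an isomorphism in $[\cP]_{1}$ mapping to $\phi$, so $[F]_{1}$ is a fibration of categories.

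Combining $[W1]$, $[W2]$, $[F1]$, $[F2]$ according to the statement of the Main Theorem, we conclude that $F$ is both a weak equivalence and a fibration, hence an acyclic fibration. There is no genuine obstacle here: the only conceptual point is the reduction from the enriched multicategorical data to the underlying $1$-categorical data via the functors $[-]_{1}$ and $\pi_{0}$, after which everything follows from the fact that a weak equivalence of simplicial sets induces a bijection on $\pi_{0}$.
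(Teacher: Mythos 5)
Your proof is correct; in fact the paper states this lemma without any proof at all, so there is nothing to compare it against. Your argument --- reducing to the linear part $[F]_{1}$ via $\pi_{0}$, observing that $[W1]$ in arity one gives weak equivalences of simplicial sets and hence bijections on $\pi_{0}$, so that $\pi_{0}([F]_{1})$ is an isomorphism of categories, which yields $[W2]$ and $[F2]$ at once --- supplies exactly the verification the paper takes for granted.
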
We can give an explicit description of the generating cofibrations and generating acyclic cofibrations.

\begin{definition}[Generating Cofibrations]The set $\bar{I}$ of generating cofibrations consists of the following $\C$-multifunctors closed under pushouts, transfinite composition, and retracts: 
\begin{description}
 
	\item[C1] Given a generating cofibration $K\hookrightarrow L$ in the model structure on $\C$ the induced multifunctors $G_{n}[K]\longrightarrow G_{n}[L]$ for each $n\ge 0$.\footnote{i.e. a cofibration of $\multi(\C)_{\{0,1,...,n\}}$}

	\item[C2] the $\C$-functors $\emptyset\longrightarrow\cI$ viewed as $\C$-multifunctors via $\Xi.$\end{description}\end{definition}\bigskip

\begin{definition}[Generating Acyclic Cofibrations]\label{generatingacycliccofibrations}The set $\bar{J}$ of generating acyclic cofibrations consists of the following $\C$-multifunctors closed under pushouts, transfinite composition, and retracts: 
\begin{description} 
	\item[A1] Given a generating acyclic cofibration $K\hookrightarrow L$ of the model structure on $\C$ the induced multifunctors $G_{n}[K]\longrightarrow G_{n}[L]$ for each $n\ge 0$.  
  	\item[A2] The $\C$-functors $\cI\longrightarrow\cH$ viewed as multifunctors via $\Xi.$
\end{description}\end{definition}

The multicategories $G_{n}[K]$ are represented by the ``corolla''  $$\xymatrix{*{1}\ar@{-}[dr]  & *{2...}\ar@{-}[d] & *{n}\ar@{-}[dl]\\
 & *{K}\ar@{-}[d]\\
 &  & *{\,}}$$where $K$ is a simplicial set, and each simplex in $K$ is an operation $1,...,n\mapsto 0$ and there are no other non-identity operations. 

\subsection[Generating Collections]{Generating Collections}The multicategories $G_{n}[-]$ are the free multicategories generated by the generating collections $G_{n}$. The $G_{n}$ form a set of small projective generators for the category $\coll(\C)_{S}$. In order to give a precise definition of these generating collections we must first understand an alternate, but equivalent, description of $\C$-collections. This section is unfortunately abstract, and the reader may want to skip this section and return to it at a later time.\\

Let $S$ be a finite set. We define a $\C$-collection on $S$-objects as a pre-sheaf $K:\mathbb{F}(S)^{op}\rightarrow\C$, where $\mathbb{F}(S)$ is the category whose objects are triples $(X,x_0,\alpha)$, with $X$ a finite set, $x_0\in X$ a chosen base point, and $\alpha:X\rightarrow S$ a chosen function. The morphisms in $\mathbb{F}(S)$ $$(X,x_0,\alpha)\stackrel{f}{\longrightarrow}(X',x'_0,\alpha')$$ are basepoint preserving bijections $f:X\rightarrow X'$ which are compatible with the chosen functions to $S$, i.e. $\alpha'\circ f = \alpha$.\\

Since we know that every finite pointed set is isomorphic to a set of the form $[n+1]=\{1,\dots,n;0\}$ with $0$ viewed as the basepoint, we can always consider the category $\mathbb{F}^\circ(S)$, which is the full subcategory of $\mathbb{F}(S)$ whose objects are finite pointed sets isomorphic to $[n+1]$.  The category $\mathbb{F}^\circ(S)$ is just direct sum of translation groupoids $(S^n\times S)\rtimes \Sigma_n$ where $\Sigma_n$ acts on $S^n\times S$ by permuting the first $n$ coordinates, $(x_1,\dots,x_n;x_{0})^{\sigma}=(x_{\sigma(1)},\dots,x_{\sigma(n)};x_{0})$, and that the inclusion $\mathbb{F}^\circ(S)\longrightarrow\mathbb{F}(S)$ is an equivalence of categories. We can simplify the picture even further by assuming that the object set, $S,$ is equipped with a \emph{linear order}, $\leq$. The category $\mathbb{F}^\leq(S)$ is also equivalent to $\mathbb{F}(S)$, but only has objects $([n+1],0,\alpha)$ for which $\alpha(1)\leq \dots \leq \alpha(n)$ and $\alpha(x_0)=0$. In other words, the category $\mathbb{F}^\leq(S)$ is just $$\coprod_c \coprod_{c_1\leq \dots \leq c_n}\textstyle \sum_{c_1\dots c_n}$$ where $\Sigma_{c_1\dots c_n}\subseteq \Sigma_n$ is the subgroup of permutations $\sigma\in\Sigma_n$ which preserve the order of the sequence, i.e. $c_{\sigma(1)}\leq \dots \leq c_{\sigma(n)}$.\footnote{This just means that $(c_1,\dots,c_n)=(c_{\sigma(1)},\dots,c_{\sigma(n)})$.} Since any finite $S$ can be given a linear order, the category $\mathbb{F}(S)$ is equivalent to one of the form $\mathbb{F}^\leq(S)$.\\

Now, if we consider $\C$-collections to be pre-sheaves $K:(\mathbb{F}^\leq(S))^{op}\longrightarrow\C,$ then for every object $Y$ in $\C$, and for each integer $n\ge 0$ we can define a $\C$-collection where $G_{n}(Y)(x_{1},...,x_{k};x_0)=0$ except in the special case where $n=k$ and $a_{i}=\alpha(i)$ and $x_0=\alpha_0$, in which case we define $G_{n}(Y)(x_{1},...,x_{k};x_0)=Y$. We consider the $G_{n}$ as symmetric multicategories by applying the free multicategory functor $\mathfrak{F}(G_{n})$ (for more on the free multicategory functor see \cite{BM07}, or\cite{EM}).\\

\begin{example}It can be helpful to understand this construction in the one-object case. Let $S=\{\star\}$. Then\begin{equation*}G_n(r) = \begin{cases} \unit[\Sigma_n], & \text{if $r = n$}, \\
0, & \text{otherwise}. \end{cases}\end{equation*} Where $\unit[\Sigma_n]$ denotes the symmetric sequence in~$\C$ formed by the sum over $\Sigma_n$ copies of the tensor unit $\unit\in\C$. The symmetric group $\Sigma_n$ acts on $G_n(n) = \unit[\Sigma_n]$ by translations on the right, and hence acts on $G_n$ on the right by automorphisms of $\Sigma_*$-objects. The symmetric sequences $G_n$, $n\in\mathbb{N}$, are characterized by the following property:\begin{prop}There exists a natural $\Sigma_n$-equivariant isomorphism \begin{equation*}\omega_n(Y): Y(n)\longrightarrow\Hom_{\C^{\Sigma_*}}(G_n,Y),\end{equation*}for all $C\in\C^{\Sigma_*}$.\end{prop}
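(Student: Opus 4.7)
The plan is to unpack both sides of the claimed isomorphism via the explicit formulas given earlier in the paper, and to observe that $G_n$ is essentially the $\Sigma_n$-free representing object for ``evaluation at arity $n$''.

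First I would apply the definition $\Hom_{\C^{\Sigma_*}}(G_n,Y) = \prod_{r \geq 0} \Hom_{\C}(G_n(r), Y(r))^{\Sigma_r}$. Because $G_n(r) = 0$ for $r \neq n$, every factor in this product is trivial except the $r = n$ factor, so the product collapses to the single term $\Hom_{\C}(\unit[\Sigma_n], Y(n))^{\Sigma_n}$, with $\Sigma_n$ acting diagonally --- by right translation on $G_n(n) = \unit[\Sigma_n]$ and via the given symmetric structure on $Y(n)$.

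Next I would use the fact that $\unit[\Sigma_n] = \bigoplus_{\sigma \in \Sigma_n}\unit$ is the free $\C$-object on $\unit$ carrying a right $\Sigma_n$-action, i.e.\ that taking $\Sigma_n$-equivariant morphisms out of $\unit[\Sigma_n]$ is the same as taking morphisms out of $\unit$. Combining this with $\Hom_{\C}(\unit, Y(n)) \cong Y(n)$ (which holds because $\unit$ is the tensor unit of $\C$), one identifies $\Hom_{\C}(\unit[\Sigma_n], Y(n))^{\Sigma_n}$ with $Y(n)$, and I would define $\omega_n(Y)$ to be the inverse of this composite. Naturality in $Y$ is automatic since every step is functorial.

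The only genuine bookkeeping is the $\Sigma_n$-equivariance claim. For this, I would track the right $\Sigma_n$-action on $G_n$ backwards through the two reductions above and verify that it transports to the given $\Sigma_n$-action on $Y(n)$. I expect no real obstacle here, only care with left versus right actions and ensuring that the ``invariant sub-object'' convention in the external hom formula extracts $Y(n)$ rather than, say, the coinvariants; once the correct conventions are fixed, the equivariance drops out of the universal properties used above.
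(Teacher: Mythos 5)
Your proposal is correct and matches the paper's own argument step for step: collapse the product defining $\Hom_{\C^{\Sigma_*}}(G_n,Y)$ to the single arity-$n$ factor $\Hom_{\C}(\unit[\Sigma_n],Y(n))^{\Sigma_n}$, identify this with $\Hom_{\C}(\unit,Y(n))\simeq Y(n)$ via the freeness of $\unit[\Sigma_n]$ as an object with right $\Sigma_n$-action, and check that the right-translation action corresponds to the internal action on $Y(n)$. The paper likewise treats the equivariance as dropping out of these identifications, so there is nothing missing.
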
The proof follows immediately from the definitions, namely we have$$\Hom_{\C^{\Sigma_*}}(G_n,Y)\simeq\Hom_{\C}(\unit[\Sigma_n],Y(n))^{\Sigma_n}$$and$$\Hom_{\C}(\unit[\Sigma_n],Y(n))^{\Sigma_n}\simeq\Hom_{\C}(\unit,Y(n))\simeq Y(n).$$ The $\Sigma_n$-action by right translations on $\unit[\Sigma_n]$ corresponds to the internal $\Sigma_n$-action of $Y(n)$ under the isomorphisms in the second line. Hence, we obtain a $\Sigma_n$-equivariant isomorphism $\omega_n(Y):Y(n)\longrightarrow\Hom_{\C^{\Sigma_*}}(G_n,Y)$.\end{example}


We will delay the proof of the model structure momentarily, to prove the following lemmas.

\begin{lemma}[Classifying Fibrations]\label{classifyingfibrations}A $\C$-multifunctor is a fibration if and only if it has the right lifting property with respect to the class of generating acyclic cofibrations.\end{lemma}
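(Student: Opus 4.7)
The plan is to prove the two directions separately, in each case translating the RLP against $\bar{J} = \{G_n[K]\to G_n[L]\}\cup\{\Xi(\cI)\to\Xi(\cH)\}$ into lifting problems I can control. I would use the free--forgetful adjunction of Theorem \ref{forget} to unpack A1-liftings into lifting problems in $\C$, and the adjunction $\Xi\dashv[-]_{1}$ (extended pointwise to the simplicial setting) to unpack A2-liftings into lifting problems in $\cCat(\C)$, where Bergner's characterization applies.

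For the forward direction, assume $F$ satisfies F1 and F2. A lifting problem for $F$ against $G_n[K]\to G_n[L]$ is, by the universal property of the free multicategory $G_n[-]$, the data of a choice of objects $x_1,\dots,x_n;x$ in $\cP$ together with a commutative square in $\C$ whose top is $K\to\cP(x_1,\dots,x_n;x)$ and bottom is $L\to\cQ(Fx_1,\dots,Fx_n;Fx)$; by F1 the right-hand vertical map is a fibration in $\C$, so the required lift exists. For a lifting problem against $\Xi(\cI)\to\Xi(\cH)$ I would apply the adjunction $\Xi\dashv[-]_{1}$ to convert it into a lifting problem in $\cCat(\C)$ against $\cI\to\cH$ for the simplicial functor $[F]_{1}:[\cP]_{1}\to[\cQ]_{1}$. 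The hom-maps of $[F]_{1}$ are fibrations by F1 at arity one, and $\pi_{0}([F]_{1})$ is an isofibration by F2; Proposition \ref{generatingtrivialcofibrationsbergner} then says $[F]_{1}$ is a Bergner fibration, so the required lift exists.

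For the reverse direction I would run these correspondences backwards. From the RLP against A1 at each signature I would extract that $F:\cP(x_1,\dots,x_n;x)\to\cQ(Fx_1,\dots,Fx_n;Fx)$ has the RLP against every generating acyclic cofibration of $\C$ and is therefore a fibration in $\C$, yielding F1. The adjunction then converts the RLP against $\Xi(\cI)\to\Xi(\cH)$ into the RLP of $[F]_{1}$ (as a simplicial functor) against $\cI\to\cH$; combined with F1 at arity one, Proposition \ref{generatingtrivialcofibrationsbergner} makes $[F]_{1}$ a Bergner fibration of simplicial categories, and by definition this implies $\pi_{0}([F]_{1})$ is an isofibration of ordinary categories, which is F2.

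The hard part will be the A2 step in the forward direction: one needs Bergner's characterization and must have F1 at arity one already in hand in order to promote the bare $\cI\to\cH$ lifting property into the stronger statement that $[F]_{1}$ is a Bergner fibration. A secondary subtlety is notational, since the paper uses the symbol $[\cP]_{1}$ both for the simplicial linear part and for its $\pi_{0}$; the adjunction $\Xi\dashv[-]_{1}$ must be invoked at the simplicial-category level, before passing to $\pi_{0}$, and the Bergner-fibration property is what bridges the two uses.
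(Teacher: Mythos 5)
Your proposal is correct and follows essentially the same route as the paper's own proof: unpack the RLP against the arity-$n$ maps $G_n[K]\to G_n[L]$ via freeness into hom-wise lifting problems in $\C$ (giving F1), and unpack the RLP against $\Xi(\cI)\to\Xi(\cH)$ via the adjunction $\Xi\dashv[-]_{1}$ into Bergner's characterization of fibrations in $\cCat(\C)$ (giving F2). The only difference is expository: the paper compresses the Bergner step into a single sentence, whereas you correctly spell out that F1 at arity one must already be in hand to pass between the bare $\cI\to\cH$ lifting property and the full Bergner fibration condition.
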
 
 
\begin{proof} Maps of multicategories are defined ``locally,''i.e. given the multifunctor $F:\cP\longrightarrow\Q$, it is an easy observation that the map of $\C$ objects $$F:\cP(x_1,...,x_n;x)\longrightarrow\Q(Fx_1,...,Fx_n;Fx)$$ is a fibration, if and only if $F$ satisfies the right lifting property (RLP) with respect to multifunctors which are locally acyclic cofibrations, i.e. the set $[A1]$. Furthermore, since the functors $\Xi$ and $[-]_{1}$ form an adjoint pair, we can observe that the multifunctor $F$ satisfies the RLP with respect to the set $[A2]$ if and only if the $\C$-functor $[F]_{1}$satisfies the RLP with respect to the inclusions $\cI\longrightarrow \cH$ which we know to be true by the classification of fibrations in $\cat(\C)$. Putting this together, we conclude that $F$ has the RLP with respect to the set $J$ if and only if it $F$ is a fibration of multicategories. \end{proof} 

\begin{lemma}[Classifying Trivial Fibrations]\label{classifying trivial fibrations}A $\C$-multifunctor is a trivial fibration if and only if it has the right lifting property with respect to the class of generating cofibrations.\end{lemma}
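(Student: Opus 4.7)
The plan is to unpack the RLP against each of the two families $C1$ and $C2$ into a pointwise condition on $F$, and then verify that the conjunction of these conditions is equivalent to $F$ being a trivial fibration.

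First I would unfold the family $C1$. By the universal property of the free multicategory on a labelled corolla, a multifunctor $G_{n}[K]\to\cP$ is the data of a tuple of objects $x_0,x_1,\ldots,x_n$ in $\cP$ together with a $\C$-morphism $K\to\cP(x_1,\ldots,x_n;x_0)$. Consequently, a lifting square between $G_{n}[K]\hookrightarrow G_{n}[L]$ and $F$ is equivalent to a lifting square in $\C$ between $K\hookrightarrow L$ and the induced map $F\colon\cP(x_1,\ldots,x_n;x_0)\to\cQ(Fx_1,\ldots,Fx_n;Fx_0)$. Since $\C$ is cofibrantly generated, the RLP against the whole family $C1$ is equivalent to asserting that each such $\C$-morphism is a trivial fibration in $\C$, i.e.\ to the combined conditions $[W1]$ and $[F1]$. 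Next I would unfold $C2$ using the adjunction $\Xi\dashv[-]_1$: the RLP with respect to $\emptyset\to\Xi(\cI)$ is equivalent to $[F]_1$ having the RLP with respect to $\emptyset\to\cI$ in $\cCat$, which is the statement that the object function of $F$ is surjective.

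With these reformulations in hand, I would show that ``$F$ is surjective on objects and satisfies $[W1]$ and $[F1]$'' coincides with ``$F$ is a trivial fibration.'' The forward direction is immediate: a trivial fibration satisfies $[W1]$ and $[F1]$ by definition, and $[F]_1$ is then a trivial fibration in $\cCat$ (combining $[W2]$ and $[F2]$), which forces the object function of $F$ to be surjective. For the converse, $[W1]$ and $[F1]$ imply that $F\colon\cP(x;y)\to\cQ(Fx;Fy)$ is a trivial fibration in $\C$ for every pair of objects; taking $\pi_0$, and using that weak equivalences induce bijections on $\pi_0$, the induced map $[\cP]_1(x,y)\to[\cQ]_1(Fx,Fy)$ is a bijection, so $[F]_1$ is fully faithful. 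Combined with surjectivity on objects, $[F]_1$ is a trivial fibration in $\cCat$, yielding both $[W2]$ (weak equivalence of categories) and $[F2]$ (fibration of categories).

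The main obstacle is verifying $[F2]$ in the converse direction. Given $x\in\cP$ and an isomorphism $\phi\colon[F]_1(x)\to q$ in $[\cQ]_1$, surjectivity on objects produces some $q'\in\cP$ with $Fq'=q$; full faithfulness of $[F]_1$ then uniquely lifts $\phi$ to a morphism $\psi\colon x\to q'$ with $[F]_1(\psi)=\phi$, and applying the same lifting to $\phi^{-1}$ yields a two-sided inverse to $\psi$, so $\psi$ is an isomorphism as required.
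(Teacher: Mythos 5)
Your proof is correct and follows essentially the same route as the paper's (whose proof is simply ``nearly identical to the previous lemma''): unfold the RLP against the family [C1] into the pointwise condition [W1]$+$[F1] using the freeness of the corollas $G_n[K]$, and unfold the RLP against [C2] via the adjunction $\Xi\dashv[-]_1$ into surjectivity of the object function. The only difference is that where the paper delegates the remaining categorical step to Bergner's classification of trivial fibrations of simplicial categories, you verify directly (via $\pi_0$, full faithfulness, and the isomorphism-lifting argument) that [W1]$+$[F1] plus surjectivity on objects yields [W2] and [F2] --- a detail the paper leaves implicit.
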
 

\begin{proof} The proof is nearly identical to the proof of the previous lemma. \end{proof} 

\begin{lemma} Every acyclic cofibration is a weak equivalence.\end{lemma}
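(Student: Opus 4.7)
The plan is to prove the lemma by a retract argument: factor the acyclic cofibration as a relative $\bar{J}$-cell complex followed by a fibration, and then verify directly that relative $\bar{J}$-cell complexes are weak equivalences. Concretely, given an acyclic cofibration $F:\cP\rightarrow\cQ$, I would apply the small object argument to $\bar{J}$ to obtain a factorization $F = P\circ J$ where $J:\cP\rightarrow\cC$ is a transfinite composition of pushouts of elements of $\bar{J}$ and $P:\cC\rightarrow\cQ$ has the right lifting property with respect to $\bar{J}$. By Lemma~\ref{classifyingfibrations}, $P$ is a fibration. The defining left lifting property of $F$ then produces a diagonal in the square $(J, P, \id_{\cQ}, F)$, exhibiting $F$ as a retract of $J$. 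Since conditions W1 and W2 are each stable under retracts (weak equivalences of simplicial sets and of simplicial categories are), it will suffice to show that $J$ is a weak equivalence.

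The first verification is that each element of $\bar{J}$ is itself a weak equivalence. For a type A1 map $G_{n}[K]\rightarrow G_{n}[L]$, the only non-identity operation-space component is the acyclic cofibration $K\rightarrow L$ in $\sSets$, so W1 is immediate; W2 holds because $[G_{n}[-]]_{1}$ is a discrete category for $n\neq 1$ and, when $n=1$, has $K\rightarrow L$ as its unique non-trivial hom-space, either way giving a Bergner weak equivalence. For a type A2 map $\Xi(\cI)\rightarrow\Xi(\cH)$, the higher operation spaces are zero on both sides and the linear operations identify with the hom-spaces of $\cI$ and $\cH$, which gives W1; using the adjunction identity $[-]_{1}\circ\Xi\cong\id$, condition W2 reduces to the Bergner weak equivalence $\cI\rightarrow\cH$.

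Next I would show weak equivalences are closed under the operations that build cell complexes. A pushout of a type A1 map is the identity on objects, so it stays inside $\multi(\C)_{\obj(\cP)}$, where the Berger-Moerdijk model structure produces W1 directly; condition W2 follows because $[-]_{1}$ applied to the pushout either yields an identity (when $n\neq 1$) or a pushout in $\cCat(\sSets)$ of the Bergner acyclic cofibration induced by $K\rightarrow L$. For a pushout of a type A2 map, condition W2 follows similarly from the Bergner model structure applied to $\cI\rightarrow\cH$. Transfinite composition preserves both W1 and W2 because filtered colimits in $\multi(\C)$ are computed on operation spaces and commute with $[-]_{1}$ and $\pi_{0}$, and because weak equivalences in $\sSets$ and in $\cCat(\sSets)$ are stable under filtered colimits of cofibrations.

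The main obstacle will be the W1 verification for pushouts of type A2: attaching a copy of $\Xi(\cH)$ to $\cP$ between two chosen objects introduces new invertible linear operations which, under multicategorical composition, mix with the existing operations of $\cP$ to generate new operations of every arity in the pushout $\cP'$. I would argue via the tree-decoration analysis of free multicategories from Berger-Moerdijk that each resulting operation-space inclusion $\cP(x_{1},\ldots,x_{n};x)\rightarrow\cP'(x_{1},\ldots,x_{n};x)$ remains a weak equivalence of simplicial sets, leveraging that $\cI\rightarrow\cH$ is an acyclic cofibration between cofibrant objects in $\cCat(\sSets)_{\{0,1\}}$ so that the combinatorial contribution of the new $1$-operations at each tree-vertex is a homotopically trivial thickening.
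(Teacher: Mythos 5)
Your reduction steps are sound, and in fact more carefully spelled out than the paper's own: the small-object-argument factorization, the retract trick using the classification of fibrations, closure of W1 and W2 under retracts, and the transfinite-composition step are all fine, and the paper leaves them implicit when it says it is ``enough to show that the pushout of a multifunctor from either the set $A1$ or $A2$ is a weak equivalence.'' The problems are in the two pushout computations, which is where the content of the lemma actually lives. In the $A1$ case, you assert that the pushout of $G_n[K]\rightarrow G_n[L]$ along $F:G_n[K]\rightarrow\cP$ ``stays inside $\multi(\C)_{\obj(\cP)}$.'' The pushout map is indeed the identity on objects, but the map being pushed out lives in $\multi(\C)_{\{0,\dots,n\}}$, and the attaching map $F$ need not be injective on objects; to work in the fixed-object-set model structure you must first push $G_n[K]\rightarrow G_n[L]$ forward along the object map, factored as an injection followed by a surjection $s$, and prove that $s_*(G_n[K])\rightarrow s_*(G_n[L])$ is still a weak equivalence in $\multi(\C)_S$. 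This is precisely the paper's Claim 3, and the remark following it shows the point is not vacuous: $s_*(G_1[X])$ is the free simplicial monoid on $X$ when $s$ identifies $0$ and $1$. This gap is fixable (for instance, the pullback $s^*$ preserves the objectwise fibrations and weak equivalences, so the pushforward is left Quillen), but it must be addressed.

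The decisive gap is the $A2$ case, which you yourself flag as ``the main obstacle'' and then do not overcome. Saying you ``would argue via the tree-decoration analysis of free multicategories'' that each operation-space inclusion ``remains a weak equivalence'' is a statement of intent, not an argument: nothing in the proposal identifies which operations the pushout along $\Xi(\cI)\rightarrow\Xi(\cH)$ actually creates, or why the new composites through the adjoined object (which mix the weakly invertible $1$-operations coming from $\cH$ with operations of $\cP$ in every arity) do not change the homotopy type of the existing operation spaces. That verification is the real work of the paper's proof: it factors the pushout through the pullback $J^*(\Xi(\cH))$, handles the first stage (which is bijective on objects) with Claim 2 inside the fixed-object-set Berger--Moerdijk model structure, and handles the second stage with Claim 4, an explicit construction of the pushout showing that the resulting multifunctor is \emph{fully faithful} --- the old operation spaces are not merely weakly equivalent to their images but isomorphic, which is both stronger than and rather different from the ``homotopically trivial thickening'' your sketch hopes for; condition W2 is then deduced separately from the fact that $\pi_0$ preserves pushouts, reducing to the model structure on non-enriched $\multi$. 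Until you carry out some equivalent of that explicit pushout analysis, your proposal is a plan for a proof rather than a proof.
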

\begin{proof} We break the proof into several smaller claims. 
\begin{claim}[Claim 1:]If $m:S\longrightarrow T$ is an injective map of sets, then the induced functor $\cCat(\C)_{S}\longrightarrow\cCat(\C)_{T}$ preserves cofibrant objects.\end{claim}

\begin{claim}[Claim 2:]\label{claim 2} Consider the following pushout square of simplicial multicategories$$\begin{CD}
\cP @>F>> \cP'\\
@VVJV        @VVGV\\
\cQ @>K>> \cQ'
\end{CD}.$$If $J:\cP\longrightarrow\cQ$ is a trivial cofibration which is bijective on objects, and if $F:\cP\longrightarrow\cP'$ is a multifunctor which is injective on objects. Then $G\cP'\longrightarrow\cQ'$ is also a trivial cofibration which is bijective on objects.\end{claim}
\begin{proof}[Proof of Claim 2:]\label{proofofclaim2}We may assume that the multifunctor $J$ is the identity on objects, and is therefore a member of the class of generating acyclic cofibrations. To ease notation we will denote the object set of $\cP$ by $S$ and the object set of $\cP'$ by $S'=S\coprod T$ where we identify the effect of $F$ on objects with the coproduct of inclusions $S\hookrightarrow S'$. We can adjoin the set $T:=S/F(0)$ of objects to $\cP$, adding no new operations other than identities. This defines a new multicategory, denoted $\cP\coprod(\coprod_{T}\unit)$. In a similar manner, we construct $\cQ\coprod(\coprod_{T}\unit)$, and decompose the pushout$$\begin{CD}
\cP @>F>> \cP'\\
@VVJV        @VVGV\\
\cQ @>K>> \cQ'
\end{CD}$$ as the composition of two pushouts
$$\begin{CD}\cP  @>>>\cP\coprod(\coprod_{T}\unit)  @>>>       \cP'\\
@VVJV          @VVJ\coprod(\coprod_{T}id) V                     @VVGV\\
\cQ @>>> \cQ\coprod(\coprod_{T}\unit)   @>>>  \cQ'.
\end{CD}.$$ Now, by assumption, $J$ is a trivial cofibration of similical multicategories, and thus a trivial cofibration in the model structure on $\multi(\C)_{S}$. It follows that $J\coprod(\coprod_{T}id)$ is a trivial cofibration in the model structure on $\multi(\C)_{S'}$. This implies that $G$ is a weak equivalence in the model structure on $\multi(\C)_{S'}$. Since $G$ is also the pushout of a coifbraion of simplicial multicategories, we have that $G$ is a trivial cofibration of $\multi(\C)$ which is bijective on objects.\end{proof}
\begin{claim}[Claim 3:]\label{claim3}Let $S$ denote a set, and let $s:\{0,...,n\}\twoheadrightarrow S$ be a surjection of sets. Then for any map $X\longrightarrow Y$ of simplicial sets, the square$$\begin{CD}
G_{n}[X] @>>> s_*(G_{n}[X])\\
@VVV        @VVV\\
G_{n}[Y] @>>> s_*(G_{n}[Y])
\end{CD}$$is a pushout of simplicial multicategories. Moreover, if $X\longrightarrow Y$ is a generating trivial cofibration of simplicial sets, then the right vertical map is a weak equivalence in the model category structure on $\multi(\C)_{S}$.\end{claim}
\begin{proof}[Proof of Claim 3]Given $\cP$, a simplicial multicategory with object set $T$, a map $G_n[X]\longrightarrow\cP$ consists of a set map $f:\{0,...,n\}\longrightarrow T$ together with a map $G_n[X]\longrightarrow f^*(\cP)$. Recall that the map $G_{n}[X]\longrightarrow f^{*}(\cP)$ is just a map of simplicial sets $X\longrightarrow\cP(fx_1,...fx_n;fx_0)$. Moreover, a map $s_{*}(G_n[X])\longrightarrow\cP$ is equivalent to a map $f$ which factors as $f=g\circ s$, together with a map $s_*(G_n[X])\longrightarrow g^*(\cP)$\footnote{equivalently, a map $G_n[X]\longrightarrow f^*(\cP)$}. So it is clear that the square is a pushout. If $X\longrightarrow Y$ is a generating acyclic cofibration of simplicial sets, then $G_n[X]\longrightarrow G_n[Y]$ is a generating acyclic cofibration of $\multi(\C)_{S}$. \end{proof}
\begin{remark}Is is essential that our enriching category $\C$ be a monoidal model category since the induced operad $s_*(G_n[X])$ is trivial if $s$ identifies two numbers $i$ and $j$ with $0<i<j$. If, however, $s$ identifies $0$ and $i>0$ we get something more complicated. For example, if $n=1$ and $s$ identifies $0$ and $1$ then $s_*(G_1[X])$ is the free simplicial monoid on $X$.\end{remark} 

\begin{claim}\label{NAFA} Given a multicategory $\cP$ enriched in $\C$, a multifunctor $F:\Xi(\cI)\rightarrow\cP$ and the pushout square: 
$$\begin{CD}
\Xi(\cI) @>F>> \cP\\
@VVJV        @VVGV\\
\Xi(\cH) @>K>> \cQ
\end{CD}$$ then the multifunctor $G$ is fully faithful. \end{claim}

\begin{proof}[Proof of Claim 4] We will proceed by giving an explicit construction of a simplicial multicategory $\cQ$ from the maps $F$ and $J$ and prove that $\cQ$ is the pushout of these maps. It will then be clear from the construction that $G$ is fully faithful. 

As usual, we denote the object set of $\cP$ by $\obj(\cP)$, and the $\C$-enriched category with a single object $*$ by $\cI$. We will fix an object $x_{*}$ of $\cP$ which is in the image of $*$ under $F$. The object set of the multicategory $\cQ$ will be denoted by $\obj(\cQ)=\obj(\cP)\coprod *$.

We want to understand all possible $n$-operations of $\Q$, and to make this easier we start by considering the underlying non-symmetric multicategory, which we also denote by $\Q$. The generating $n$-operations of the non-symmetric multicategory $\Q$ are the following: 

\begin{enumerate}

	\item $\cQ(x_1,...,x_n;x)=\cP(x_1,...,x_n;x).$ 

	\item The $\cQ$ morphisms are given by the $\cP$-morphisms with an action on the left from $\Xi(\cH).$ Explicitly, let $\phi\in\cH(0,1)$, $\psi\in\cP(x_1,...,x_n;x_{*})$, and $\theta\in\cI(*,*)$.  Then $\cQ(x_1,...,x_n;*)$ can be given by $$\cH(0,1)\otimes \cP(x_1,...,x_n;x_{*})/\{\phi\circ\ J(\theta)\otimes\psi=\phi\otimes F(\theta)\circ \psi\}.$$	

	\item There are two instances where we have both a left and a right action coming from $\Xi(\cH)$, which represent $\Q(x_1,...,x_n,\underbrace{*,...,*}_{m};*)$ and $\Q(x_1,...,x_n,\underbrace{*,...,*}_{m};x)$. The $n+m$-operations $\Q(x_1,...,x_n,\underbrace{*,...,*}_{m};*)$ are the $\cP$-morphisms $\cP(x_1,...,x_n, x_*,...,x_*;x_*)$ equipped both left and right actions from $\Xi(\cH)$. The left action from $\phi\in\Xi(\cH)(0,1)$ (as above) and the right actions from $\phi'\in\Xi(\cH)(1,0)$. Explicitly, the operations $$\Xi(\cH)(0,1)\otimes\cP(x_1,...,x_n,\underbrace{x_*,...,x_*}_{m};x_*)\otimes (\underbrace{\Xi(\cH)(1,0),...,\Xi(\cH)(1,0)}_{m})$$ $$\phi\otimes\psi\otimes(\phi^{-1},...,\phi^{-1})\Big/\begin{cases}(\phi\circ J(\theta))\otimes \psi\otimes (\underbrace{\phi^{-1},...,\phi^{-1}}_{m})=\phi\otimes F(\theta)\circ\psi\otimes (\phi^{-1},...,\phi^{-1}),\cr \phi\otimes\psi\otimes(\phi^{-1},...,J(\theta)\circ\phi^{-1},...,\phi^{-1})=\phi\otimes\psi\circ_{i}F(\theta)\otimes(\underbrace{\phi^{-1},...,\phi^{-1}}_{m})\end{cases}\Biggr\}$$ The other case is similar and we leave it as an exercise. 

\item The addition of a unit morphism for the object $*$ in $\obj(\cQ)$. For this we just let $$\phi\in\Xi(\cH)(1,1)=\cQ(*;*).$$

\end{enumerate}

Composition in $\Q$ can be informally described as composition in $\cP$ tensored with composition in $\Xi(\cH)$ modulo relations coming from $\Xi(\cI).$\\

We can check that the multicategory we constructed fits into a commutative diagram 
$$\begin{CD}
\Xi(\cI) @>F>> \cP\\
@VVJV        @VVGV\\
\Xi(\cH) @>K>> \cQ
\end{CD}$$where the map $G$ takes operations in $\cP$ to the $\Q$-operations given by $\cQ(x_1,...,x_n;x)=\cP(x_1,...,x_n;x)$ and the map $K$ can be described as follows:
\begin{equation*}
K(\phi) = \left\{
\begin{array}{rl}
\phi & \text{if } \phi\in\cH(0,0)\\
\phi & \text{if } \phi\in\cH(1,1)\\
\phi\otimes\unit_{x_0} \in\Q(x_0,*)& \text{if } \phi\in\cH(0,1),\\
\unit_{x_0}\otimes\phi \in\Q(*,x_0)& \text{if } \phi\in\cH(1,0),\\
\end{array} \right.\end{equation*} If we are given another multicategory $\R$ and two multifunctors $F':\cP\rightarrow\R$ and $F'':\Xi(\cH)\rightarrow\R$ which satisfy $F''\circ J=F'\circ F$, then we can define a multifunctor $G':\Q\rightarrow\R$ as follows. On objects $G'_0(x_i)=F'_0(x_i)$ and $G'_0(*)=F''_0(1)$. On operations we have 
\begin{equation*}
\begin{array}{lc}
G'(\psi)=F'(\psi) \\
G'(\phi\otimes\psi)=F''(\phi)\circ F'(\psi) \\
G'(\phi\otimes\psi\otimes(\phi_1,...,\phi_m))=F''(\phi)\circ F'(\psi)\circ(F''(\phi_1),...,F''(\phi_m))\\
G'(\psi\otimes(\phi_1,...,\phi_m))= F'(\psi)\circ(F''(\phi_1),...,F''(\phi_m))\\
G'(\phi)=G(\phi)\end{array} \end{equation*} The reader can now check that $G'$ is a well-defined multifunctor and that $G'$ is the \emph{unique} multifunctor which satisfies $G'\circ G=F''$ and $G'\circ F=F'$. This proves that the commutative diagram given above is a pushout, and is now clear that $G$ is fully faithful.\end{proof}

\begin{proof}[Proof of Lemma]\label{proofclassifyingacyliccof} It is enough to show that the pushout of a multifunctor from either the set $A1$ or $A2$ is a weak equivalence, i.e. that if $J:\cA\rightarrow\cB$ is in either the set $A1$ or the set $A2$ and the square
$$\begin{CD}
\cA @>F>> \cP\\
@VVJV        @VVPV\\
\cB @>G>> \cQ
\end{CD}$$ is a pushout square in $\multi(\C)$, then $G$ is a weak equivalence. 

We will split the proof into two cases. Let's first assume that $J$ is an acyclic cofibration from the set $A1$, and consider the following pushout diagram:
$$\begin{CD}
G_{n}[K]   @>F>> \cP\\
@VVJV             @VVPV\\
G_{n}[L]   @>G>>    \Q
\end{CD}.$$ 

Recall that the multifunctor $F:G_{n}[K]\rightarrow\cP$ consists of no other data except a set map $F_{0}:\{0,...,n\}\rightarrow \obj(\cP)$ together with an $\C$-morphism $K\rightarrow\cP(F(1),...,F(n);F(0)).$ The set map $F_0$ can be factored into and injection followed by a surjection, $$\{0,...,n\}\hookrightarrow S'\twoheadrightarrow S$$. Now, since $K\hookrightarrow L$ is a generating acyclic cofibration of $\C$, we  factor the pushout square into two pushouts 
$$\begin{CD}G_n[K]  @>>> s_*(G_n[K]  @>>>       \cP\\
@VVJV          @VVV                     @VVGV\\
G_n[L] @>>> s_*(G_n[L])  @>>>  \cQ'\end{CD}.$$ Then claim 3~\ref{claim3} implies that the middle vertical map is a weak equivalence, and the claim~\ref{claim2} implies that $G$ is a weak equivalence. 

Now we assume that $J$ is an acyclic cofibration from set $A2$ and consider the following pushout square: $$\begin{CD}
\Xi(\cI)  @>>F>       \cP \\
@VVJV               @VVPV\\
\Xi(\cH)   @>>G>       \Q
\end{CD}$$. \\
Recall that $\Xi(I)$ denotes the $\C$-enriched category $I$, viewed as a multicategory via the functor $\Xi(-)$. The category $I$ has one object, $\{0\}$, and endomorphism monoid $I(0,0)=\unit$. The $\C$-enriched category $H$ is has two objects $\{0,1\}$ whose only non-trivial operation is $\cH(0,1)=\unit$. At the level of objects, our map $J$ is the inclusion map $\{0\}\hookrightarrow\{0,1\}$. We pullback along $J$ to factor the diagram into two pushouts:
$$\begin{CD}
\Xi(\cI)  @>>F>       \cP \\
@VVJ'V               @VVP'V\\
J^{*}(\Xi\cH)  @>>>      \R\\
@VVJ''V              @VVP''V\\
\Xi(\cH) @>>G>       \Q
\end{CD}$$. \\

In the top pushout, we claim that $J'$ is a weak equivalence in the model structure on $\multi(\C)_{\{0\}}$ (i.e. the model structure on simplicial operads). It is clear that $F$ is bijective on objects, and so $P'$ is a weak equivalence in $\multi(\C)_{S}$ (where, as usual, $S:=\obj(\cP)$).

Now, the map $J''$ is fully faithful in each simplicial degree and bijective on objects, so by applying claim $4$ degreewise, we conclude that $P''$ is fully faithful. By composition, $P:\cP\longrightarrow\cQ$ satisfies condition $W1$. Since $\pi_0$ preserves pushouts and $\pi_0(J)$ is a trivial cofibration, it follows from the model structure on $\multi$, that condition $W2$ is satisfied. \end{proof}We have now proved the lemma.\end{proof}

We can now prove the existence of the model category structure on the category of all small simplicial multicategories.\\

\begin{proof}[Proof of Theorem]\label{proofofmaintheorem}The category $\multi(\C)$ is co-complete (see,~\cite{EM}), and one can quickly check that all three classes of multifunctors are closed under retracts and that the class of weak equivalences satisfies the ``2-out-3'' property. \\

Given an arbitrary multifunctor we can apply the small object argument to produce a factorization $F=P\circ I$ where $I$ is in $\bar{I}$ and $P$ has the right lifting property with respect to $I$. The our lemma~\ref{classifying trivial fibrations} implies that $P$ is an acyclic fibration. In a similar manner, we factor $F=Q\circ J$, where $J$ is in $\bar{J}$ and $Q$ has the right lifting with respect to $Q$. The lemma~\ref{classifyingfibrations} implies that $Q$ is a fibration of multicategories.\\

Finally, we check that given the square 

$$\begin{CD}
\cA @>F>> \cP\\
@VVIV        @VVPV\\
\cB @>G>> \Q
\end{CD}$$ with $I$ a cofibration and $P$ a fibration of multicategories. If $P$ is also a weak equivalence, then we find a lift by the classification of cofibrations. If $I$ is a weak equivalence, then we factor $I=Q\circ J:\cA\hookrightarrow\widetilde{\cB}\twoheadrightarrow\cB$ where $Q$ is a fibration and $J$ is an acyclic cofibration. Since we have shown that every acyclic cofibration is a weak equivalence, we know that $J$ is a weak equivalence. The ``2-out-of-3'' property for weak equivalences now implies that $Q$ is an acylic fibration.\\

Since $J$ is an acyclic cofibration, and $P$ is a fibration, we know that $P$ has the RLP with respect to $J$.  In other words, we have a lift $H:\widetilde{\cB}\longrightarrow\cP$ so that $J\circ H =F$ and $Q\circ G=P\circ H$. \\

Now, since $I$ is an acyclic cofibration and $Q$ is a trivial fibration, there exists a retract $S$ of $Q$ with $S\circ I = J$. The composite $H\circ S$ provides the desired lift.\end{proof}

\end{document}